\definecolor{ManuGreen}{rgb}{0.2,0.6,0.2}
\definecolor{ManuGenta}{rgb}{0.5,0,1}
\definecolor{ManuOrange}{rgb}{1,0.6,0.3}
\definecolor{ManuRed}{rgb}{1,0,0.5}
\definecolor{NiceCrimson}{rgb}{0.6471, 0.1098, 0.1882}
\newcommand{\coarse}{\text{\sc Coarse}}
\newcommand{\apply}{\text{\sc Apply}}
\newcommand{\solve}{\text{\sc Solve}}
\newcommand{\rhs}{\text{\sc Rhs}}   
\newcommand{\aref}[1]{\hyperref[#1]{Appendix}}
\newtheorem{theorem}{Theorem}[section]
\newaliascnt{lem}{theorem}
\newtheorem{lemma}[lem]{Lemma}
\newaliascnt{prop}{theorem}
\newtheorem{prop}[prop]{Proposition}
\newaliascnt{cor}{theorem}
\newtheorem{corollary}[cor]{Corollary}
\newaliascnt{rem}{theorem}
\newaliascnt{def}{theorem}
\newtheorem{defi}[def]{Definition}
\theoremstyle{definition}
\newaliascnt{ex}{theorem}
\newaliascnt{ass}{theorem}
\newtheorem{ass}[ass]{Assumption}
\renewcommand{\d}{\,\mathrm{d}}											
\renewcommand*{\epsilon}{\varepsilon}                                   
\renewcommand*{\rho}{\varrho}                                   		
\newcommand*{\nach}{\rightarrow}                                        
\newcommand*{\sep}{\; \vrule \;}                                        
\newcommand*{\N}{\mathbb{N}}                                            
\newcommand*{\R}{\mathbb{R}}                                            
\newcommand*{\C}{\mathbb{C}}                                            
\newcommand*{\G}{\mathcal{G}}                                           
\newcommand*{\F}{\mathcal{F}}                                         	
\newcommand*{\Co}{\mathcal{C}}                                         	
\newcommand*{\I}{\mathcal{I}}                                         	
\renewcommand*{\P}{\mathcal{P}}                                         
\newcommand*{\B}{\mathfrak{B}}                                          
\newcommand*{\leer}{\emptyset}                                          
\newcommand*{\abs}[1]{\left| #1 \right|}                                
\newcommand*{\norm}[1]{\left\| #1 \right\|}                             
\newcommand*{\normmod}[1]{\left\|\hspace{-1pt}\left| #1 \right|\hspace{-1pt}\right\|}                             
\newcommand*{\link}[1]{(\ref{#1})}                                      
\newcommand*{\distr}[2]{\left\langle #1, #2 \right\rangle}              
\newcommand*{\dist}[2]{\mathrm{dist}\!\left( #1, #2 \right)}            
\renewcommand{\tilde}[1]{ \widetilde{#1} }        						
\DeclareMathOperator{\supp}{supp}										
\DeclareMathOperator{\Id}{Id}											
\title{Adaptive Wavelet BEM for Boundary Integral Equations:\\ 
Theory and Numerical Experiments\footnote{This work has been 
supported by the Deutsche Forschungsgemeinschaft DFG (DA 360/19-1)
and by the Swiss National Science Foundation SNSF (200021E-142224/1).}}
\author{S.~Dahlke\footnote{Corresponding author.}, \and 
H.~Harbrecht, \and M.~Utzinger, \and M.~Weimar}
\date{
\today} 
\begin{document}   
\maketitle   

\begin{abstract}
\noindent 
In this paper, we are concerned with the numerical treatment of boundary 
integral equations by means of the adaptive wavelet boundary element 
method (BEM). In particular, we consider the second kind Fredholm 
integral equation for the double layer potential operator on patchwise 
smooth manifolds contained in $\R^3$. The corresponding operator 
equations are treated by means of adaptive implementations that are 
in complete accordance with the underlying theory. The numerical 
experiments demonstrate that adaptive methods really pay off in 
this setting. The observed convergence rates fit together very well 
with the theoretical predictions that can be made on the basis of a 
systematic investigation of the Besov regularity of the exact solution.

\smallskip
\noindent \textbf{Keywords:} \textit{Besov spaces, weighted Sobolev spaces, 
adaptive wavelet BEM, non-linear approximation, integral equations, 
double layer potential operator, regularity, manifolds.}

\smallskip
\noindent \textbf{AMS Subject Classification:} 30H25,  35B65, 42C40, 45E99, 46E35, 47B38, 65T60.
\end{abstract}

\section{Introduction}
This paper is concerned with the theoretical analysis and 
numerical treatment of integral equations of the form
\begin{equation}\label{eq:generaleq}
S(u)=g \quad \text{on} \quad \partial\Omega,
\end{equation}
where $\partial \Omega$ denotes a patchwise smooth boundary 
of some bounded domain $\Omega\subset\R^d$. A typical example 
is given by the second kind Fredholm integral equation
\begin{equation}\label{DoubleLayerProb}
	S_{\mathrm{DL}}(u) 
	:=	\left( \frac{1}{2} \, \Id - K \right)\!(u)
	= g
	\quad \text{on} \quad \partial\Omega,
\end{equation}
where 
\begin{equation}\label{DLP}
	v \mapsto K (v) 
	:= \frac{1}{4\pi} \int_{\partial\Omega} v(y)\,
		\frac{\partial}{\partial \eta(y)} \frac{1}{\abs{\cdot-y\,}_2} \d\sigma(y)
\end{equation}
denotes the \emph{harmonic double layer potential operator} 
on $\partial{\Omega}$ which naturally arises from the so-called 
\emph{indirect method} for Dirichlet problems for Laplace's 
equation in $\Omega$. For details and further references,
see, e.g., \cite[Chapter 3.4]{SS11}, as well as 
\cite{DK87,JK81,JK95,K94,V84}.

Indirect methods obviously provide a reduction of the problem 
dimension. Therefore, in recent years, much effort has been 
spend to design efficient numerical schemes for the solution 
to these kind of equations such as, e.g., the multipole method
\cite{Rokh}, the panel clustering \cite{HackNov} or the adaptive
cross approximation \cite{Beb1}. In particular, one of the major 
computational bottlenecks is given by the fact that the 
discretization of \link{eq:generaleq} usually leads to 
densely populated matrices. In this regard, wavelet-based 
approaches provide striking advantages, since the wavelet's vanishing 
moment property can be used to design very powerful compression 
strategies \cite{DHS1,Sch06}. Although these algorithms turned out to 
be quite successful when dealing with classical test problems, 
for real-world problems involving millions of unknowns, it is 
necessary to further enhance efficiency by means of adaptive 
strategies. In the meantime, fully adaptive wavelet methods
which are guaranteed to converge with optimal order have 
been established \cite{DHS07,GantStev}. Although reliable 
error estimators for boundary integral operators exist and optimal 
convergence of traditional boundary element discretizations
have been proven, see, e.g., \cite{Faer,FeiKarMelPrae,Gant13}, 
we are not aware of any other method which is also 
{\em computationally\/} optimal.

Although the numerical experiments performed so far have been 
quite promising, one principle problem remains. Besides numerical 
evidence, it is of course very desirable to derive rigorous statements 
under which conditions the use of adaptive algorithms for \link{eq:generaleq} 
really pays off in practice. Given a dictionary $\B$ in the corresponding 
solution space, the best one could expect is that the adaptive algorithm 
realizes the convergence rate of the associated best $n$-term approximation 
with respect to $\B$ which is defined as follows. 

\begin{defi}
Let $\G$ denote a (quasi-) normed space and let $\B=\{b_1,b_2,\ldots\}$ 
be some countable subset of $\G$. Then
\begin{equation}\label{def_sigma}
	\sigma_n(u; \B, \G)
	:= \inf_{i_1,\ldots,i_n \in \N} \inf_{c_1,\ldots,c_n\in\C} 
		\norm{u - \sum_{m=1}^n c_m \, b_{i_m} \sep \G }, \qquad n\in\N,
\end{equation}
defines the error of the \emph{best $n$--term approximation} 
to some element $u$ with respect to the \emph{dictionary}
$\B$ in the (quasi-) norm of $\G$.
\end{defi}

Since $i_1,\ldots,i_n$ and $c_1,\ldots,c_n$ may depend on 
$u$ in an arbitrary way, this reflects how well we can 
approximate $u$ using a finite linear combination of elements 
in $\B$. These linear combinations obviously form a highly 
non-linear manifold in the linear space $\G$.  
Of course, to get 
a reasonable result concerning the achievable rate of best 
$n$--term approximations, additional information (such as 
further smoothness properties) of the target function $u$ 
is needed. This is usually modeled by its membership in 
some additional (quasi-) normed space~$\F$. Hence, if 
$\F$ denotes such a space which is embedded into $\G$, 
then we may study the asymptotic decay of
\begin{equation*}
	\sigma_n(\F;\B,\G) := \sup_{\substack{u\in \F,\\\norm{u \sep \F} \leq 1}} 
		\sigma_n(u;\B, \G), \qquad \text{as}\quad n\to\infty.
\end{equation*}
If the dictionary consists of a wavelet basis, it is indeed possible 
to construct optimal adaptive algorithms. That is, these schemes 
are guaranteed to converge with optimal order (i.e., they realize 
the convergence rate of best $n$--term wavelet approximation 
as defined above), while their computational costs stay proportional 
to the used number of degrees of freedom \cite{CDD01, CDD02}. 
Therefore, we can state that, in the wavelet setting, adaptivity 
really pays if the convergence order of best $n$--term approximation 
is strictly higher than the corresponding rate for classical non-adaptive 
algorithms. In this regard, it has been shown that the convergence 
order of \link{def_sigma} w.r.t.\ the $L_2$--norm is determined by 
the maximal regularity $\alpha$ of the $d$-variate function under 
consideration in the so--called {\em adaptivity scale} of 
{\em Besov spaces} 
\begin{equation} \label{adaptivityscale}
B^{\alpha_\tau}_{\tau}(L_{\tau}) \qquad \text{with} 
	\qquad \frac{1}{\tau}=\frac{\alpha_\tau}{d} + \frac{1}{2}, 
	\quad 0<\alpha_\tau < \alpha,
\end{equation}
where for all $\alpha_\tau>0$,
\begin{equation}\label{rateBesov}
\sigma_n(B^{\alpha_\tau}_{\tau}(L_{\tau}); \B, L_2)
	\sim n^{-\alpha_\tau/d}, \qquad \text{as} \quad n\to\infty,
\end{equation}
see, e.g., \cite{DNS06,D98, DJP92}. 
On the other hand, the convergence order of classical (uniform) 
algorithms is given by $n^{-s/d}$, where $s$ denotes the maximal 
regularity of the exact solution in the $L_2$--Sobolev scale, i.e., 
$s=\sup\{\mu > 0 \sep u\in H^{\mu}\}$; see, e.g., 
\cite{DDD97, DNS06,Hac1992} for details. In conclusion, the 
use of adaptivity is justified if the smoothness $\alpha$ of the 
exact solution $u$ to \link{eq:generaleq} in the adaptivity scale 
\link{adaptivityscale} of Besov spaces is higher than its 
Sobolev regularity $s$. 

For partial differential equations on bounded domains, a lot of 
positive results in this direction exist; see, e.g., \cite{DD97}. 
Quite recently, in \cite{DahWei2015}, also a positive result for 
integral operators on two--dimensional patchwise smooth manifolds 
has been derived. It has turned out that for a large class of operators, 
including the second kind Fredholm integral equation \link{DoubleLayerProb}
for the double layer potential operator \link{DLP}, the Besov smoothness 
of the solution in the adaptivity scale can be up to twice as high as the Sobolev 
regularity, so that adaptivity definitely makes sense. 

Nevertheless, one important issue still has to be discussed.  Of course, 
the estimate in \link{rateBesov} is of asymptotic nature, so that it is not 
clear that the advantage of adaptivity can really be observed in numerical 
practice. For example, it might happen that some of the involved constants 
are so large that the asymptotic behavior becomes significant only for 
values of $n$ which are far beyond any practical feasibility. It is one of 
the major goals of this paper to convince the reader that this is actually 
not the case. In order to do so, we performed numerical experiments 
that go far beyond trivial toy problems. We considered  the double 
layer potential operator on patchwise smooth manifolds. Our test 
cases, e.g., the Fichera vertex, are chosen in such a way that 
non-trivial singularities in the solution may show up at the interfaces. 
It turns out that the adaptive wavelet BEM indeed completely resolves 
all the singularities without any a priori information on the refinement 
strategy. We designed test cases where the solutions provably possess 
a relatively small Sobolev smoothness, but twice as much Besov regularity.  
And indeed, the adaptive wavelet algorithm converges twice as fast as 
the corresponding uniform scheme that simply uses all wavelets up to 
a given refinement level. Finally, we like to emphasize that our numerical 
realizations are in complete accordance with the theory development in 
\cite{DHS07,GantStev}. 

The paper is organized as follows: In \autoref{sect:prelim}, we start with 
some preparations concerning the parametrization of surfaces. In addition, 
here we define a scale of weighted Sobolev spaces~$X^k_\rho(\partial\Omega)$ 
which is needed to formulate our theoretical results. We also present a short 
introduction to the theory of layer potentials as far as it is needed for our 
purposes (see \autoref{sect:layer}). Moreover, we discuss the basic 
properties of the wavelet bases that are required for the use in adaptive 
algorithms. In \autoref{sect:Besov}, we define Besov-type spaces 
$B^{\alpha}_{\Psi,q}(L_p(\partial\Omega))$ as introduced in \cite{DahWei2015}
and clarify their relations to best $n$--term wavelet approximation. Our 
main Besov regularity results for solutions to integral equations are briefly 
summarized in \autoref{sect:mainresults}. \autoref{sec:AWEM} is dedicated
to the adaptive wavelet method. We survey on the basic ingredients needed
to realize an algorithm that realizes asymptotically optimal complexity. By 
asymptotically optimal we mean that 
 any target accuracy can be achieved at a computational expense that stays proportional the number of degrees of freedom
that is needed to approximate  the solution  by  $n$--term approximation with the same accuracy. 
Numerical results are then presented in \autoref{sec:numerix}. 
They are in good agreement with the theory. Finally,
in \autoref{sec:conclusion}, we state some concluding remarks.

\textbf{Notation:} For families $\{a_{i}\}_{i\in\I}$ and $\{b_{i}\}_{i\in\I}$ of 
non-negative real numbers over a common index set we write $a_{i} 
\lesssim b_{i}$ if there exists a constant $c>0$ such that
\begin{equation*}
	a_{i} \leq c\cdot b_{i}
\end{equation*}
holds uniformly in $i\in\I$.
Consequently, $a_{i} \sim b_{i}$ means $a_{i} \lesssim b_{i}$ 
and $b_{i} \lesssim a_{i}$.

\section{Preliminaries}\label{sect:prelim}
\subsection[Surfaces and weighted Sobolev spaces]{Surfaces $\partial\Omega$ and weighted Sobolev spaces $X^k_{\rho}(\partial\Omega)$}\label{sect:boundaries}
In this paper, we consider Lipschitz surfaces $\partial\Omega$ 
which are boundaries of bounded, simply connected, closed 
domains $\Omega\subset\R^3$ with polyhedral structure and 
finitely many quadrilateral sides. W.l.o.g.\ we can assume all 
these sides to be flat with corresponding straight edges; cf.\ 
\cite[Remark 2.1]{DahWei2015}. 

We will pursue essentially two different (but equivalent) 
approaches to describe $\partial\Omega$, where both of 
them will be used later on. For the first approach, consider 
the patchwise decomposition 
\begin{equation}\label{partition}
	\partial\Omega = \bigcup_{i=1}^I \overline{F_i},
\end{equation}
where $\overline{F_i}$ denotes the closure of the $i$th 
(open) \emph{patch} of $\partial\Omega$ which is a subset 
of some affine hyperplane in $\R^3$, bounded by a closed 
polygonal chain connecting exactly four points (\emph{vertices} $\nu$
of~$\Omega$). Here, we only require that the partition 
\link{partition} is essentially disjoint in the sense that the 
intersection of any two patches $\overline{F_i} \cap 
\overline{F_\ell}$, $i\neq \ell$, is either empty, a common 
edge, or a common vertex of $\Omega$. Furthermore, we 
will assume the existence of (sufficiently smooth) diffeomorphic 
parametrizations
\begin{equation*}
	\kappa_i \colon [0,1]^2 \nach \overline{F_i}, \qquad i=1,\ldots,I,
\end{equation*}
which map the unit square onto these patches. Finally, 
we define the class of \emph{patchwise smooth functions} 
on $\partial\Omega$ by
\begin{equation*}
	C_{\mathrm{pw}}^\infty(\partial\Omega) 
	:= \left\{ u \colon \partial\Omega \nach \C \sep u 
	\text{ is globally continuous and } u\big|_{\overline{F_i}} 
	\in C^{\infty}\!\left( \overline{F_i}\right) \text{ for all } i \right\}.
\end{equation*}

In the second approach, the surface of $\Omega$ is modeled 
(locally) in terms of the boundary of its tangent cones $\Co_n$, 
subordinate to the vertices $\nu_1,\ldots,\nu_N$ of $\Omega$.
For $n\in\{1,\ldots,N\}$, the boundary of the infinite cone $\Co_n$ 
consists of $T_n\geq 3$ essentially disjoint, open plane sectors 
(called \emph{faces}) which will be denoted by $\Gamma^{n,1}, 
\ldots, \Gamma^{n,T_n}$, i.e.,
\begin{equation*}
	\partial\Co_n = \bigcup_{t=1}^{T_n} \overline{\Gamma^{n,t}}, \qquad n=1,\ldots,N.
\end{equation*}
It will be convenient to use local polar coordinates $(r,\phi)$ 
in each of these faces~$\Gamma^{n,t}$. Then, every function 
$f_n \colon \partial\Co_n \nach \C$ can be described by a finite 
collection of functions $(f_{n,1},\ldots,f_{n,T_n})$ of the variable 
$y := (r\,\cos(\phi), r\,\sin(\phi)) \in \R^2$. Moreover, if $\gamma_{n,t}$ 
denotes the opening angle of $\Gamma^{n,t}$, the quantities $r$ and
\begin{equation*}
	q(\phi):=\min\!\left\{ \phi, \gamma_{n,t} - \phi \right\} \in (0,\pi)
\end{equation*}
serve as a distance measure of the point $y$ to the face boundary; 
see \cite[Formula (7)]{DahWei2015} for details. Therefore, 
following~\cite{E92}, weighted Sobolev spaces on the boundary of 
the cone $\Co_n$ can be defined as the closure of all continuous, 
facewise smooth, compactly supported functions on~$\partial\Co_n$,
\begin{equation*}
	C_{0,\mathrm{fw}}^\infty(\partial\Co_n) 
	:= \left\{ f_n \in C_0(\partial\Co_n) \sep f_{n,t} \in C^{\infty}\!
	\left( \overline{\Gamma^{n,t}}\right) \text{ for all } t=1,\ldots,T_n \right\},
\end{equation*}
with respect to the norm $\norm{f_n \sep X^k_{\rho}(\partial\Co_n)}$ 
given by
\begin{equation}\label{eq:normCn}
	\norm{f_n \sep L_2(\partial\Co_n)} 
		+ \sum_{t=1}^{T_n} \sum_{\substack{\beta
	=(\beta_r,\beta_\phi)\in\N_0^2\\1\leq\abs{\beta}\leq k}} 
	\norm{ \left(1+\frac{1}{r}\right)^{\rho} (q\,r)^{\beta_r} 
	\left( \frac{\partial}{\partial r} \right)^{\beta_r} q^{\beta_\phi-\rho} 
	\left( \frac{\partial}{\partial \phi} \right)^{\beta_\phi} f_{n,t} 
	\sep L_2\!\left(\Gamma^{n,t}\right) }.
\end{equation}
That is, we let
\begin{equation*}
		X^k_{\rho}(\partial\Co_n)
		:= \overline{C_{0,\mathrm{fw}}^\infty (\partial\Co_n)}^{\, 
			\norm{\cdot \sep X^k_{\rho}(\partial\Co_n)} },
\end{equation*}
where, as usual, the sum over an empty set is to be interpreted 
as zero, $k\in \N$ is the smoothness of the space, and $\rho\in[0,k]$ 
controls the strength of the weight. 

In order to analyze functions $u$ defined on the whole surface  
$\partial\Omega \subset\bigcup_{n=1}^N \partial\Co_n$, we localize 
them with the help of a special resolution of unity $(\varphi_n)_{n=1}^N$ 
to cone faces near the vertices $\nu_1$, \ldots, $\nu_N$ of $\Omega$. 
Hence, for $u\colon \partial \Omega \nach \C$, $k\in\N$, and 
$0\leq\rho\leq k$, we let
\begin{equation}\label{def_X}
	\norm{u \sep X^k_\rho(\partial\Omega)} 
	:= \sum_{n=1}^N \norm{\varphi_n \,u \sep X^k_\rho(\partial\Co_n)}
\end{equation}
and define the weighted Sobolev space on $\partial\Omega$ as
\begin{equation*}
	X^k_{\rho}(\partial\Omega)
		:= \overline{C_{\mathrm{pw}}^\infty (\partial\Omega)}^{\, 
		\norm{\cdot \sep X^k_{\rho}(\partial\Omega)} }.
\end{equation*}
For details, the interested reader is again referred to \cite{DahWei2015}.

\subsection{Layer potentials}\label{sect:layer}
We shall be concerned with the solution of the 
Dirichlet problem for the Laplacian
\begin{equation*}
  \Delta U = 0\quad\text{in}\ \Omega, \qquad
	U = g\quad\text{on}\ \partial\Omega,
\end{equation*}
on a domain $\Omega\subset\R^3$ with patchwise smooth 
boundary $\partial\Omega$ by means of \emph{harmonic 
double layer potentials}. To that end, let $\sigma$ denote 
the canonical surface measure. Since this surface is assumed 
to be Lipschitz, for $\sigma$-a.e.\ $x\in\partial\Omega$ there 
exists the outward pointing normal vector $\eta(x)$. By 
$\partial/\partial\eta(x)$ we denote the corresponding conormal 
derivative in $x\in\partial\Omega$. Making the potential ansatz
\begin{equation}\label{indirect}
  U(x) := \frac{1}{4\pi} \int_{\partial\Omega} u(y)\,
		\frac{\partial}{\partial \eta(y)} \frac{1}{\abs{x-y\,}_2} \d\sigma(y),
			\qquad x\in\Omega,
\end{equation}
and letting $x$ tend to $\partial\Omega$, we arrive, in view of the 
jump condition, at the second kind Fredholm integral equation 
\link{DoubleLayerProb} for the unknown \emph{density} $u$. The 
integral operator under consideration is of order zero,
\begin{equation*}
S_{\mathrm{DL}} = \left( \frac{1}{2} \, \Id - K \right):
	L_2(\partial\Omega)\to L_2(\partial\Omega),
\end{equation*}
where $\Id$ denotes the identical mapping on $\partial\Omega$ 
and $K$ is the harmonic double layer potential operator as 
defined in \link{DLP}.
One speaks here of the \emph{indirect method}: The sought 
solution $U\in H^1(\Omega)$ is not computed directly, but 
indirectly via the evaluation of the potential \link{indirect},
as soon as the potential's density $u$ is known. 

When it comes to the numerical approximation of the solution $u$ to
\link{DoubleLayerProb}, well-posedness of the problem is essential.
Invertibility within the (unweighted) Sobolev scale $H^s(\partial\Omega)$ 
is known as \emph{Verchota's Theorem} \cite{V84}; 
see also \cite[Remark A.5]{E92}.
\begin{prop}[Verchota~{\cite[Theorem 3.3(iii)]{V84}}]\label{prop:Verchota}
For all $s\in [0,1]$, the bounded linear operator
$S_\mathrm{DL} \colon H^{s}(\partial\Omega) \nach 
H^{s}(\partial\Omega)$ is invertible.
\end{prop}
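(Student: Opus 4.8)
The plan is to establish the result first at the two endpoints $s=0$ and $s=1$ and to obtain the intermediate cases by interpolation; the genuine analytic core lies in the case $s=0$, i.e.\ invertibility of $S_{\mathrm{DL}}=\tfrac{1}{2}\Id-K$ on $L_2(\partial\Omega)$.

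For the $L_2$-theory I would first recall that on a Lipschitz surface both $K$ and its $L_2(\partial\Omega)$-adjoint $K^*$ are bounded operators on $L_2(\partial\Omega)$; this is the Coifman--McIntosh--Meyer theorem on the $L_2$-boundedness of Calder\'on--Zygmund singular integral operators on Lipschitz graphs. Next I would bring in the single layer potential
\[
  \mathcal{V}f(x) := \frac{1}{4\pi}\int_{\partial\Omega} \frac{f(y)}{\abs{x-y}_2}\d\sigma(y),
\]
together with the classical jump relations: the tangential gradient of $\mathcal{V}f$ is continuous across $\partial\Omega$, whereas its interior and exterior conormal derivatives are $(-\tfrac{1}{2}\Id+K^*)f$ and $(\tfrac{1}{2}\Id+K^*)f$, so that their difference returns the density, $f=(\tfrac{1}{2}\Id+K^*)f-(-\tfrac{1}{2}\Id+K^*)f$. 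The decisive ingredient is then the Rellich identity: for $U$ harmonic in $\Omega$ and a constant vector field $e$, integration by parts yields
\[
  \int_{\partial\Omega}\langle e,\eta(x)\rangle\,\abs{\nabla U(x)}^2\d\sigma(x)
  = 2\int_{\partial\Omega}\langle e,\nabla U(x)\rangle\,\frac{\partial U}{\partial\eta}(x)\d\sigma(x).
\]
Choosing $e$ adapted to the Lipschitz structure so that $\langle e,\eta\rangle\geq c>0$ almost everywhere and splitting $\nabla U$ into tangential and normal parts gives the norm equivalence $\norm{\nabla_t U\sep L_2(\partial\Omega)}\sim\norm{\frac{\partial U}{\partial\eta}\sep L_2(\partial\Omega)}$. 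Applying this to $U=\mathcal{V}f$ from inside and outside, and using that $\nabla_t\mathcal{V}f$ is common to both sides, I would deduce $\norm{(\tfrac{1}{2}\Id\pm K^*)f\sep L_2(\partial\Omega)}\sim\norm{\nabla_t\mathcal{V}f\sep L_2(\partial\Omega)}$. Combined with the jump relation above, these two-sided bounds show that $\tfrac{1}{2}\Id\pm K^*$ are injective with closed range on $L_2(\partial\Omega)$; by duality $\tfrac{1}{2}\Id\pm K$ are then surjective, and injectivity follows from the uniqueness theorems for the interior and exterior harmonic boundary value problems. Hence $S_{\mathrm{DL}}$ is invertible on $L_2(\partial\Omega)$.

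For the endpoint $s=1$ I would invoke the regularity theory for layer potentials on Lipschitz domains: $S_{\mathrm{DL}}$ maps $H^1(\partial\Omega)$ boundedly into itself, and the $L_2$-solution of $S_{\mathrm{DL}}u=g$ inherits the additional tangential smoothness of $g$, which upgrades the $L_2$-estimate to an $H^1$-estimate and yields invertibility on $H^1(\partial\Omega)$. Since $H^s(\partial\Omega)=[L_2(\partial\Omega),H^1(\partial\Omega)]_s$ for $0<s<1$, interpolating the uniformly bounded inverses available at the two endpoints produces a bounded inverse on every intermediate space $H^s(\partial\Omega)$; the interpolated inverses agree with $S_{\mathrm{DL}}^{-1}$ on the dense intersection, so they genuinely invert $S_{\mathrm{DL}}$.

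I expect the case $s=0$ to be the main obstacle. On a truly Lipschitz (non-smooth) surface the operator $K$ is not compact, so the classical Fredholm--Riesz argument is unavailable and everything must be extracted from the Rellich identity. The delicate points are the geometric choice of the vector field $e$ guaranteeing $\langle e,\eta\rangle\geq c>0$ uniformly --- which for our polyhedral $\partial\Omega$ interacts with the edges and vertices --- and the careful bookkeeping of all boundary terms, for which the Coifman--McIntosh--Meyer bounds are indispensable.
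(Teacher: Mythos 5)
You should first note that the paper contains no proof of this statement at all: \autoref{prop:Verchota} is imported verbatim from Verchota \cite[Theorem 3.3(iii)]{V84} (with a further pointer to \cite[Remark A.5]{E92}), so the only meaningful benchmark is Verchota's original argument. Your sketch is precisely a reconstruction of that argument --- Coifman--McIntosh--Meyer boundedness, the single layer potential with its jump relations, Rellich identities to get the two-sided $L_2$ bounds, duality plus BVP uniqueness at $s=0$, the regularity theory at $s=1$, and complex interpolation in between --- so there is no difference in route to report; the question is only whether your rendering of that route is sound.

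Against that benchmark there is one step that, as written, genuinely fails: on a \emph{closed} surface no constant vector field $e$ can satisfy $\langle e,\eta\rangle\geq c>0$ $\sigma$-a.e., since the divergence theorem forces $\int_{\partial\Omega}\langle e,\eta(x)\rangle \d\sigma(x)=0$; concretely, for the cube or the Fichera vertex the outward normals include $\pm e_j$ for all three coordinate directions. Hence the Rellich identity in the form you state it is only usable locally, over coordinate cylinders in which $\partial\Omega$ is a Lipschitz graph. Verchota's proof localizes with a partition of unity (equivalently, works with a smooth, non-constant transversal field), and the price is that every Rellich estimate holds only modulo lower-order terms, schematically $\norm{f \sep L_2(\partial\Omega)} \lesssim \norm{(\pm\tfrac{1}{2}\Id+K^*)f \sep L_2(\partial\Omega)} + \norm{Tf \sep L_2(\partial\Omega)}$ with $T$ compact. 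Consequently injectivity and closed range of $\pm\tfrac{1}{2}\Id+K^*$ are \emph{not} immediate from the comparability of norms, as your write-up suggests: one must first prove injectivity of these adjoint operators separately (again via uniqueness for the interior/exterior Neumann problems through the single layer ansatz, with attention to decay at infinity in $\R^3$), then absorb the compact error terms, and only then run your duality argument. Likewise, the $s=1$ endpoint that you dispatch with ``invoke the regularity theory'' is not a routine bootstrap; it is the regularity problem, a second Rellich-type theorem of the same depth as the $L_2$ statement (see also \cite{K94}). So your proposal is the correct roadmap --- the same one the cited source follows --- but the two places you yourself flag as ``delicate'' are exactly where the theorem's content lies, and the global constant-field version of the key identity, taken literally, is false on $\partial\Omega$.
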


However, for our purposes, bounded invertibility in $H^s(\partial\Omega)$
is not enough. As we shall see later in \autoref{sect:mainresults}, also
regularity estimates in the weighted Sobolev scale $X_\rho^k(\partial\Omega)$
as introduced in \autoref{sect:boundaries} are needed.
In this context, the subsequent result taken from \cite{E92} is
particularly useful.

\begin{prop}[Elschner~{\cite[Remark~4.3]{E92}}]\label{prop:Elschner}
There exists a constant $\rho_0 \in (1, 3/2)$, depending on 
the surface $\partial\Omega$, such that the following is true:
For all $0 \leq \rho < \rho_0$ and every $k\in\N$ with $\rho \leq k$,
the bounded linear operator $S_\mathrm{DL} \colon X_\rho^k(\partial\Omega) 
\nach X_\rho^k(\partial\Omega)$ is invertible.
\end{prop}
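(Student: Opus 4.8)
The plan is to reduce the statement to a Fredholm assertion, because injectivity comes for free. By the definition \link{def_X}, the weighted norm dominates the unweighted term $\norm{\varphi_n\, u \sep L_2(\partial\Co_n)}$, and since $(\varphi_n)_{n=1}^N$ is a resolution of unity we obtain a continuous embedding $X_\rho^k(\partial\Omega) \hookrightarrow L_2(\partial\Omega) = H^0(\partial\Omega)$ for every admissible pair $(\rho,k)$. Hence any $u \in X_\rho^k(\partial\Omega)$ with $S_{\mathrm{DL}}(u) = 0$ already lies in $H^0(\partial\Omega)$, where $S_{\mathrm{DL}}$ is injective by \autoref{prop:Verchota}; so the kernel is always trivial. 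It therefore suffices to prove that $S_{\mathrm{DL}} \colon X_\rho^k(\partial\Omega) \to X_\rho^k(\partial\Omega)$ is Fredholm of index zero for all $0 \leq \rho < \rho_0$ with $\rho \leq k$, since then the trivial kernel forces bijectivity and the open mapping theorem supplies the bounded inverse.

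First I would establish boundedness and localize the operator. Using the patch decomposition \link{partition} together with the resolution of unity, $K$ splits into contributions concentrated near the vertices $\nu_n$ (modeled on the tangent cones $\Co_n$), contributions along the edges away from the vertices, and a remainder with smooth kernel. Because each patch is flat, the remainder is smoothing and harmless in any weighted norm, so the analysis focuses on the edge and vertex pieces, whose continuity in the scale $X_\rho^k$ encodes the corner behavior of the double layer potential. The crucial feature is that on a genuinely polyhedral surface $K$ is \emph{not} compact on $L_2(\partial\Omega)$ --- its essential spectrum is produced by the edges and corners --- which is precisely why the weighted spaces $X_\rho^k$ are needed and why a threshold $\rho_0$ appears at all.

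The core of the argument is the symbolic analysis on the model geometries. On each cone $\Co_n$ I would introduce the local polar coordinates $(r,\phi)$ and apply a Mellin transform in $r$, converting the vertex part of $\tfrac12\Id - K$ into an operator-valued Mellin symbol on the cross-section, depending holomorphically on the Mellin covariable. Fredholmness in $X_\rho^k(\partial\Omega)$ is then equivalent to invertibility of this symbol --- together with the analogous two-dimensional edge (dihedral) symbols --- on the weight line fixed by $\rho$. The admissible weights are exactly those whose weight line avoids the discrete set of points at which the symbol degenerates, and $\rho_0$ is defined as the first such critical weight to the right of the origin. The main obstacle is the quantitative claim $\rho_0 \in (1,3/2)$: it rests on a detailed spectral study of the double layer operator on the model polyhedral cone, locating its smallest singular exponent, which is the technical heart taken from \cite{E92}.

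Finally, to pin down the index I would use that it is locally constant in the weight. By construction no symbol degeneracy occurs for $\rho \in [0,\rho_0)$, so the index of $S_{\mathrm{DL}}$ on $X_\rho^k(\partial\Omega)$ cannot jump across this interval and is therefore constant there. Its value is read off from the Mellin symbol at $\rho = 0$ via the index formula for conical and edge operators, yielding zero, in agreement with the classical smooth case in which $K$ is compact and $\tfrac12\Id - K$ has index zero. Combined with the trivial kernel from the first step, this gives bijectivity of $S_{\mathrm{DL}}$ on $X_\rho^k(\partial\Omega)$ for every $0 \leq \rho < \rho_0$ with $\rho \leq k$, and the open mapping theorem delivers the bounded inverse, which is the assertion.
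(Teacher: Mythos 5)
First, a point of reference: the paper does not prove \autoref{prop:Elschner} at all. The proposition is imported verbatim from Elschner \cite[Remark~4.3]{E92}, and the only paper-internal content is the remark immediately following it, namely that the sign convention of \cite{V84} and the factor $1/2$ in the normalization of $K$ used in \cite{E92} are immaterial. So your attempt cannot be measured against an internal argument; it has to stand as a free-standing proof of Elschner's theorem.

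Judged that way, it has the right architecture but two genuine gaps. The architecture --- localize to the tangent cones, Mellin-transform the vertex parts into operator-valued symbols, characterize Fredholmness on $X^k_\rho(\partial\Omega)$ by invertibility of the symbols on the weight line determined by $\rho$, and combine index zero with injectivity inherited from \autoref{prop:Verchota} through the embedding $X^k_\rho(\partial\Omega) \hookrightarrow L_2(\partial\Omega)$ (this last reduction is correct and cleanly stated) --- is indeed how results of this type are established. The first gap is circularity: the decisive quantitative claim that the first critical weight $\rho_0$ lies in $(1,3/2)$, together with the boundedness of $S_{\mathrm{DL}}$ on the weighted scale and the invertibility of the model symbols, is explicitly declared to be ``the technical heart taken from \cite{E92}''. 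Since the statement to be proven \emph{is} the result of \cite{E92}, deferring exactly these steps to that reference leaves nothing actually proven. The second gap is the index computation. You correctly observe that on a polyhedral surface $K$ is \emph{not} compact on $L_2(\partial\Omega)$ --- its essential spectrum is generated by the edges and vertices --- but you then determine the index at $\rho=0$ ``in agreement with the classical smooth case in which $K$ is compact''. These two statements cannot both be invoked: precisely because compactness fails, the index-zero claim at the endpoint weight requires an actual argument (for instance a homotopy of the Mellin and edge symbols to an invertible family, with verification that no symbol degenerates along the deformation, or an explicit computation of the symbol spectrum), and none is supplied. Until these two items are filled in, the proposal is a plausible road map of Elschner's proof rather than a proof.
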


Note that in the notation of \cite{V84} $K$ is replaced by $-K$ and 
that the operator $K$ considered in \cite{E92} differs from our notation 
by a factor of $1/2$. Nevertheless, the whole analysis carries over.

\subsection{Wavelet bases}
During the past years, wavelets on domains $\Omega \subseteq \R^d$ 
have become a powerful tool in both, pure and applied mathematics. 
More recently, several authors proposed various constructions of 
wavelet systems extending the idea of multiscale analysis to manifolds 
based on patchwise descriptions such as \link{partition}; see, e.g., 
\cite{CTU99,CTU00,CM00,DS99,HaSch04,HS06}. Later on in this 
paper, bases of these kinds will be used to define new types of Besov 
spaces on $\partial \Omega$. Therefore, in this subsection, we collect 
some basic properties that will be needed for this purpose.

With the help of the parametric liftings $\kappa_i$, $i=1,\ldots,I$, 
an inner product for functions $u,v\colon\partial\Omega\nach\C$ 
can be defined patchwise by
\begin{equation*}
	\distr{u}{v} := \sum_{i=1}^I \distr{u\circ \kappa_i}{v\circ\kappa_i}_{\square},
\end{equation*}
where $\distr{\cdot}{\cdot}_{\square}$ denotes the usual $L_2$-inner 
product on the square $[0,1]^2$. Since all $\kappa_i$ are assumed to 
be sufficiently smooth the norm induced by $\distr{\cdot}{\cdot}$ can 
be shown to be equivalent to the norm in $L_2(\partial\Omega)$:
\begin{equation}\label{normeq}
	\normmod{\cdot}_0 := \sqrt{\distr{\cdot}{\cdot}} \sim \norm{\cdot \sep L_2(\partial\Omega)},
\end{equation}
see, e.g., Formula (4.5.3) in~\cite{DS99}.

Most of the known wavelet  constructions are based on tensor products 
of boundary-adapted wavelets/scaling functions (defined on intervals) 
which are finally lifted to the patches $F_i$ describing the surface 
$\partial\Omega$. A wavelet basis $\Psi=(\Psi^{\partial\Omega}, 
\tilde{\Psi}^{\partial\Omega})$ on $\partial\Omega$ then consists 
of two collections of functions $\psi^{\partial\Omega}_{j,\xi}$ and 
$\tilde{\psi}^{\partial\Omega}_{j,\xi}$, respectively, that form 
($\distr{\cdot}{\cdot}$-biorthogonal) Riesz bases for $L_2(\partial\Omega)$. 
In particular, every $u\in L_2(\partial\Omega)$ has a unique expansion
\begin{equation}\label{expansion}
	u = P_{j^\star-1}(u) + \sum_{j\geq j^\star} \sum_{\xi \in {\nabla}_j^{\partial\Omega}} 
		\distr{u}{\tilde{\psi}^{\partial\Omega}_{j,\xi}} \, \psi^{\partial\Omega}_{j,\xi}
\end{equation}
satisfying
\begin{equation*}
	\norm{u\sep L_2(\partial\Omega)} 
	\sim \norm{P_{j^\star-1}(u) \sep L_2(\partial\Omega)} 
	+ \left( \sum_{j\geq j^\star} \sum_{\xi \in {\nabla}_j^{\partial\Omega}} 
	\abs{\distr{u}{\tilde{\psi}^{\partial\Omega}_{j,\xi}}}^2 \right)^{1/2}.
\end{equation*}
Therein, $P_{j^\star-1}$ denotes the biorthogonal projector that maps 
$L_2(\partial\Omega)$ onto the finite dimensional span of all 
generators  on the  coarsest level  $j^\star-1$. 

In the sequel, we will require  that the wavelet basis under 
consideration satisfies all the conditions  collected in the 
following assumption.

\begin{ass}\label{ass:basis}
\noindent
\begin{itemize}
\item[(I)] As indicated in \link{expansion}, both (the primal and the dual) 
systems are indexed by their level of resolution $j\geq j^\star$, as well as 
their location (and type) $\xi \in \nabla^{\partial\Omega}$. We assume that 
this collection of grid points on the surface $\partial\Omega$ can be split 
up according to the levels $j$ and the patches $\overline{F_i}$:
\begin{equation*}
	{\nabla}^{\partial\Omega} 
	= \bigcup_{j=j^\star}^\infty {\nabla}_j^{\partial\Omega},
	\quad \text{where, for all } j\geq j^\star,
	\quad 
	{\nabla}_j^{\partial\Omega} = \bigcup_{i=1}^I {\nabla}_j^{F_i}
	\quad \text{with}\quad   \#\nabla_j^{F_i} \sim 2^{2j}.
\end{equation*}

\item[(II)]  All dual wavelets are $L_2$-normalized: 
\begin{equation}\label{normalized}
	\norm{\tilde{\psi}^{\partial\Omega}_{j,\xi} \sep L_2(\partial\Omega)} \sim 1
	\qquad \text{for all} \qquad j\geq j^\star, \ \xi\in {\nabla}_j^{\partial\Omega}.
\end{equation}

\item[(III)] We assume that all elements $\tilde{\psi}^{\partial\Omega}_{j,\xi} 
\in \tilde{\Psi}^{\partial\Omega}$ are compactly supported on $\partial\Omega$. 
Furthermore, we assume that their supports  contain the  corresponding 
grid point~$\xi$ and satisfy
\begin{equation}\label{supp}
	\abs{\supp \tilde{\psi}^{\partial\Omega}_{j,\xi}} \sim 2^{-2j}
	\qquad \text{for all} \qquad j\geq j^\star, \ \xi\in {\nabla}_j^{\partial\Omega}.
\end{equation}

\item[(IV)] Consider the set $\Pi_{\tilde{d}-1}([0,1]^2)$ of polynomials 
$\P$ on the unit square which have a total degree $\deg\P$ strictly 
less than $\tilde{d}$. Then, we assume that the  dual system 
$\tilde{\Psi}^{\partial\Omega}$ satisfies
\begin{equation}\label{vanish}
	\distr{\P}{\tilde{\psi}^{\partial\Omega}_{j,\xi}\circ \kappa_i}_\square = 0
	\qquad
	\text{for all}
	\qquad \P\in \Pi_{\tilde{d}-1}([0,1]^2),
\end{equation}
whenever $\tilde{\psi}^{\partial\Omega}_{j,\xi} \in \tilde{\Psi}^{\partial\Omega}$ 
is completely supported in the interior of some patch $F_i\subset\partial\Omega$, 
$i\in\{1,\ldots,I\}$. This property is commonly known as \emph{vanishing moment 
property} of order $\tilde{d}\in\N$.

\item[(V)] The number of dual wavelets at level $j$ with distance $2^{-j}$ 
to one of the patch boundaries is of order  $2^{j}$, i.e.,
\begin{equation}\label{number_wavelets1}
\#\!\left\{ \xi \in \nabla^{\partial \Omega}_j \sep 0 
	< \dist{\supp \tilde{\psi}^{\partial\Omega}_{j,\xi}}{\bigcup_{i=1}^I \partial F_i} 
	\lesssim 2^{-j} \right\} \sim 2^{j} 
\quad \text{for all} \quad j\geq j^\star.
\end{equation}
Moreover, for the dual wavelets intersecting one of the 
patch interfaces, we assume that
\begin{equation*}
\#\!\left\{ \xi \in \nabla^{\partial \Omega}_j \sep 
	\supp \tilde{\psi}^{\partial\Omega}_{j,\xi} \cap 
	\bigcup_{i=1}^I \partial F_i \neq \leer \right\} \lesssim 2^{j}
\quad \text{for all} \quad j\geq j^\star.
\end{equation*}

\item[(VI)] Every point $x\in\partial\Omega$ is contained in the 
supports of a uniformly bounded number of dual wavelets at level $j$:
\begin{equation}\label{finite_overlap}
\#\!\left\{ \xi \in \nabla^{\partial \Omega}_j \sep x\in\supp \tilde{\psi}^{\partial\Omega}_{j,\xi} \right\} \lesssim 1
\quad \text{for all} \quad j\geq j^\star \quad \text{and each} \quad x\in\partial\Omega.
\end{equation}

\item[(VII)] Finally, we assume that the Sobolev spaces 
$H^s(\partial\Omega)=W^s(L_2(\partial\Omega))$ in the 
scale
\begin{equation*}
	-\frac{1}{2} < s < \min\!\left\{\frac{3}{2}, s_{\partial\Omega}\right\}
\end{equation*}
can be characterized by the decay of wavelet expansion coefficients, that is 
\begin{equation}\label{equiv_sobolev_norm}
		\norm{u\sep H^s(\partial\Omega)} 
		\sim \norm{P_{j^\star-1}(u) \sep L_2(\partial\Omega)} 
		+ \left( \sum_{j\geq j^\star} \sum_{\xi \in {\nabla}_j^{\partial\Omega}} 2^{2sj} 
		\abs{\distr{u}{\tilde{\psi}^{\partial\Omega}_{j,\xi}}}^2 \right)^{1/2}.
\end{equation}

Here, the spaces for negative $s$ are defined by duality and 
$s_{\partial\Omega} \geq 1$ depends on the interior angles 
between different patches $F_i$ of the manifold under 
consideration; cf.\ \cite[Section~4.5]{DS99}.
\end{itemize}
\end{ass}

Fortunately, all these  assumptions are satisfied for all the 
constructions we mentioned at the beginning of this subsection. 
In particular, the \emph{composite wavelet basis} as constructed 
in~\cite{DS99} is a typical example which will serve as our main 
reference. Note that although those wavelets are usually at most 
continuous across patch interfaces, they are able to capture 
arbitrary high smoothness in the interior by increasing the order 
of the underlying boundary-adapted wavelets.

\section{Besov regularity}\label{sect:Besov}
\subsection[Besov-type function spaces]{Besov-type function spaces on $\partial\Omega$}
Besov spaces essentially generalize the concept of Sobolev 
spaces. On $\R^d$ they are typically defined using harmonic 
analysis, finite differences, moduli of smoothness, or interpolation 
techniques. Characteristics (embeddings, interpolation results, and 
approximation properties) of these scales of spaces are then obtained 
by reducing the assertion of interest to the level of sequences spaces 
by means of characterizations in terms of building blocks (atoms, local 
means, quarks, or wavelets). To mention at least a few references, the 
reader is referred to the monographs \cite{RS96,T06}, as well as to the 
articles \cite{DJP92,FJ90,KMM07}. This list is clearly not complete.

Besov spaces on manifolds such as boundaries of domains in $\R^d$ 
can be defined as trace spaces or via pullbacks based on (overlapping) 
resolutions of unity. In general, traces of wavelets are not wavelets anymore, 
and if we use pullbacks, then wavelet characterizations are naturally limited 
by the global smoothness of the underlying manifold. Therefore, let us recall 
a notion of Besov-type spaces from \cite[Definition~4.1]{DahWei2015} which is \emph{based on} expansions w.r.t.\ some 
biorthogonal wavelet Riesz basis $\Psi=(\Psi^{\partial\Omega}, 
\tilde{\Psi}^{\partial\Omega})$ satisfying the conditions of the 
previous section which we assume to be given fixed: 

\begin{defi}
A tuple of real parameters $(\alpha, p, q)$ is said to be 
\emph{admissible} if
\begin{equation}\label{parameter}
	\frac{1}{2} \leq \frac{1}{p} \leq \frac{\alpha}{2} + \frac{1}{2}
	\qquad \text{and} \qquad 
	0 < q 
	\leq \begin{cases}
		2 & \text{if\/}\ 1/p = \alpha/2 + 1/2, \\   
		\infty, & \text{otherwise}.
	\end{cases}
\end{equation}
Given a wavelet basis $\Psi=(\Psi^{\partial\Omega}, 
\tilde{\Psi}^{\partial\Omega})$ on $\partial\Omega$
and a tuple of admissible parameters $(\alpha, p, q)$
let $B_{\Psi,q}^\alpha(L_p(\partial\Omega))$ denote the 
collection of all complex-valued functions $u\in L_2(\partial\Omega)$ 
such that the (quasi-) norm
\begin{equation*}
		\norm{u \sep B_{\Psi,q}^\alpha(L_p(\partial\Omega))}
		:= \norm{P_{j^\star-1}(u) \sep L_p(\partial\Omega)} + \left( \sum_{j\geq j^\star} 2^{j \left(\alpha+2\left[ \frac{1}{2} - \frac{1}{p} \right] \right)q} \left[ \sum_{\xi \in {\nabla}_j^{\partial\Omega}} \abs{\distr{u}{\tilde{\psi}^{\partial\Omega}_{j,\xi}}}^p \right]^{q/p} \right)^{1/q}
\end{equation*}
is finite (with the usual modification if $q=\infty$).
\end{defi}

In the remainder of this subsection, we collect some basic 
properties of the Besov-type spaces introduced above. To start with, 
we note that all spaces $B_{\Psi,q}^\alpha(L_p(\partial\Omega))$ 
are quasi-Banach spaces, Banach spaces if and only if $\min\{p,q\} \geq 1$, and Hilbert spaces if and only if $p=q=2$.

Formally, different bases $\Psi$ might lead to different function 
spaces even if all remaining parameters $(\alpha,p,q)$ that 
determine the spaces may coincide. Nevertheless, in \cite{Wei2016},
it has been shown that under very natural conditions the resulting 
Besov spaces coincide up to equivalent norms. These conditions 
are fortunately satisfied by most of the wavelet bases which are 
available in the literature.

Finally, let us recall an assertion which clarifies the relation of the 
scales $B_{\Psi,q}^{\alpha}(L_{p}(\partial\Omega))$ and best 
$n$--term wavelet approximation; cf.\ \cite[Proposition~4.7]{DahWei2015}.
For a visualization of the involved embeddings we refer to the left \emph{DeVore-Triebel diagramm} in \autoref{fig:DeVoreTriebel2}.

\begin{prop}\label{prop:nterm}
	For $\gamma \in \R$, let $(\alpha+\gamma,p_0,q_0)$ and 
	$(\alpha,p_1,q_1)$ be admissible parameter tuples. If 
	$\gamma > 2 \cdot \max\!\left\{0,\frac{1}{p_0} - \frac{1}{p_1}\right\}$,
	then
	\begin{equation*}
		\sigma_n \!\left( B_{\Psi,q_0}^{\alpha+\gamma}(L_{p_0}(\partial\Omega));
			\Psi^{\partial\Omega},B_{\Psi,q_1}^\alpha(L_{p_1}(\partial\Omega)) \right) 
		\sim n^{-\gamma/2}.
	\end{equation*}
	Moreover, if $\gamma = 2 \cdot \max\!\left\{0,\frac{1}{p_0} - \frac{1}{p_1}\right\}$ 
	and $q_0 \leq q_1$, then
	\begin{equation*}
		\sigma_n \!\left( B_{\Psi,q_0}^{\alpha+\gamma}(L_{p_0}(\partial\Omega));
		\Psi^{\partial\Omega},B_{\Psi,q_1}^\alpha(L_{p_1}(\partial\Omega)) \right) 
		\sim n^{-\min\{\gamma/2,\, 1/q_0-1/q_1\}}. 
	\end{equation*}
\end{prop}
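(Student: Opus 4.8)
The plan is to transfer the problem to the sequence space of wavelet coefficients and there to establish matching upper and lower bounds. Since both quasi-norms are built from the \emph{same} biorthogonal system $\Psi$, the biorthogonality relations $\distr{\psi^{\partial\Omega}_{j',\xi'}}{\tilde{\psi}^{\partial\Omega}_{j,\xi}}=\delta_{(j',\xi'),(j,\xi)}$ show that every $n$-term combination of the $\psi^{\partial\Omega}_{j,\xi}$ has $\tilde{\Psi}^{\partial\Omega}$-coefficients supported on the chosen index set. Because $\norm{\cdot\sep B^\alpha_{\Psi,q_1}(L_{p_1}(\partial\Omega))}$ depends only on the moduli of these coefficients and is monotone in them, the best approximation supported on a fixed index set $\Lambda$ is the restriction to $\Lambda$ of the coefficient sequence of $u$; the coarse-scale contribution $P_{j^\star-1}(u)$ adds only a fixed finite number of terms and is asymptotically irrelevant. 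Writing $d_{j,\xi}:=\distr{u}{\tilde{\psi}^{\partial\Omega}_{j,\xi}}$ and collecting these into level blocks $d_j=(d_{j,\xi})_{\xi\in\nabla^{\partial\Omega}_j}$ with $\#\nabla^{\partial\Omega}_j\sim 2^{2j}$, the quantity to estimate becomes $\sigma_n=\sup_{\norm{d}_0\le 1}\inf_{\#\Lambda\le n}\norm{d|_{\Lambda^{c}}}_1$, where $\norm{d}_0$ and $\norm{d}_1$ abbreviate the weighted $\ell_{q_0}(\ell_{p_0})$- and $\ell_{q_1}(\ell_{p_1})$-norms with level weights $2^{js_0}$ and $2^{js_1}$; here $s_0:=(\alpha+\gamma)+2(1/2-1/p_0)$ and $s_1:=\alpha+2(1/2-1/p_1)$, so that $s_0-s_1=\gamma-2\,(1/p_0-1/p_1)$.

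For the direct estimate in the generic case $\gamma>2\max\{0,1/p_0-1/p_1\}$ (equivalently $s_0>s_1$) I would argue blockwise. Within a single level the weights are constant, so discarding all but the $n_j$ largest entries of $d_j$ leaves, by Stechkin's lemma, a remainder with $\norm{\cdot\sep\ell_{p_1}}\lesssim n_j^{-\max\{0,1/p_0-1/p_1\}}\norm{d_j\sep\ell_{p_0}}$. Distributing a total budget $\sum_j n_j\le n$ over the levels up to a cut-off $J\sim\log_2 n$ (and dropping the higher blocks entirely), inserting the weight decay $2^{-j(s_0-s_1)}$, and balancing the resulting terms yields $\sigma_n\lesssim n^{-\gamma/2}$: the within-level step (sparsification when $p_0<p_1$, or Hölder's inequality on the retained blocks when $p_0\ge p_1$) accounts for the $p$-mismatch and, together with the across-level weight decay, balances to the overall rate $\gamma/2$. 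The matching lower bound comes from a single flat block: placing equal coefficients on all $\sim 2^{2J}$ indices of one level $J$, normalized in $\norm{\cdot}_0$ and choosing $J$ so that $2^{2J}\sim 2n$, forces every $n$-term approximation to leave an error $\gtrsim 2^{-J\gamma}\sim n^{-\gamma/2}$ in $\norm{\cdot}_1$.

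The delicate part is the borderline case $\gamma=2\max\{0,1/p_0-1/p_1\}$, i.e.\ $s_0=s_1$ when $p_0<p_1$. Here the single flat block still only yields the rate $n^{-\gamma/2}$, but this need no longer be optimal: because the across-level weighted smoothness gap has closed, the binding configurations spread their mass over $\sim\log n$ consecutive levels, and the approximation is then governed by the embedding $\ell_{q_0}\hookrightarrow\ell_{q_1}$ in the level index rather than by smoothness. This is exactly where the hypothesis $q_0\le q_1$ enters and where the second exponent $1/q_0-1/q_1$ arises; a refined blockwise Stechkin estimate combined with a discrete Hardy/Hölder inequality across levels gives the upper bound $n^{-\min\{\gamma/2,\,1/q_0-1/q_1\}}$, while the matching lower bound follows from a test sequence that is flat both within and across a dyadic range of levels. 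I expect this limiting analysis---tracking the interplay between the within-level $p$-sparsification and the across-level $q$-summability once the smoothness gain is exhausted---to be the main obstacle, whereas the generic case reduces to the standard Jackson/Bernstein machinery for $n$-term wavelet approximation. Alternatively, since the spaces at hand are genuine wavelet sequence spaces, both cases may be deduced from the corresponding abstract result in \cite{DahWei2015}.
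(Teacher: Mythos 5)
Note first that there is no proof in the paper to compare against: \autoref{prop:nterm} is recalled verbatim from \cite[Proposition~4.7]{DahWei2015}, so your closing alternative --- deducing both cases from the abstract result in \cite{DahWei2015} --- \emph{is} the paper's entire argument. Your direct sequence-space proof is therefore a genuinely different, self-contained route, and for the generic case $\gamma > 2\max\{0,1/p_0-1/p_1\}$ it is sound: the reduction via biorthogonality and monotonicity of the target (quasi-)norm, the blockwise Stechkin estimate with geometrically decaying budgets $n_j$, and the single flat block on level $J$ with $2^{2J}\sim 2n$ (which, after normalization, indeed leaves an error $\gtrsim 2^{-J\gamma}\sim n^{-\gamma/2}$) are exactly the right ingredients, and the exponents match. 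One slip: your parenthetical ``equivalently $s_0>s_1$'' is false when $p_0>p_1$; there the hypothesis is $\gamma>0$, which is strictly stronger than $s_0>s_1$, and it is $\gamma$ (not $s_0-s_1$) that governs the error of the discarded levels, so the distinction is not cosmetic.

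The borderline case, however, is diagnosed correctly but not proved, and the gaps are concrete. First, the extremal configurations witnessing the exponent $\mu:=1/q_0-1/q_1$ place $O(1)$ coefficients on each of $\sim n$ levels, not on $\sim\log n$ levels: with $\delta:=\gamma/2$, a flat array of $m$ entries per level over $L$ levels forces an error $\gtrsim m^{-\delta}L^{-\mu}$ subject to $mL\gtrsim n$, and the extremes $(m,L)=(n,1)$ and $(m,L)=(1,n)$ produce the two competing rates, whence the lower bound $n^{-\min\{\delta,\mu\}}$. Second, and more seriously, the upper bound cannot be obtained by spreading the budget evenly over the active levels and then invoking a H\"older or embedding inequality across levels: uniform allocation over $K$ levels gives $(K/n)^{\delta}+K^{-\mu}$, which balances at the rate $n^{-\delta\mu/(\delta+\mu)}$, strictly worse than $n^{-\min\{\delta,\mu\}}$ whenever $0<\delta,\mu<\infty$. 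One needs a genuinely non-uniform allocation --- for instance the Lagrange-optimal choice $n_k\sim c\,a_k^{q_1/(1+\delta q_1)}$ with a threshold cut-off, where $a_k$ are the rearranged weighted level norms --- together with the full ball condition $\sum_k a_k^{q_0}\le 1$ rather than only the pointwise bound $a_k\lesssim k^{-1/q_0}$; carrying this out does yield $n^{-\min\{\delta,\mu\}}$, and this is precisely where $q_0\le q_1$ enters. As written, ``a refined blockwise Stechkin estimate combined with a discrete Hardy/H\"older inequality across levels'' names the hoped-for tool rather than supplying it. If you do not wish to work this out, the clean repair is the option you list last, namely citing \cite[Proposition~4.7]{DahWei2015} --- which is exactly what the paper does.
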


In view of our application to integral equations, we are particularly 
interested in the rate of convergence of best $n$--term wavelet 
approximation to solutions $u\in B^{s'+\gamma}_{\Psi,\tau}
(L_\tau(\partial\Omega))$ with
\begin{equation}\label{eq:parameters}
s'\in[0,\max\{3/2,s_{\partial\Omega}\}), \quad \gamma\geq 0, 
	\quad \text{and} \quad \tau:=(\gamma/2+1/2)^{-1}
\end{equation}
w.r.t.\ the norm in $H^{s'}(\partial\Omega)$.

\begin{corollary}
For $s',\gamma$, and $\tau$ given by \link{eq:parameters}, we have
	\begin{equation*}
		\sigma_n \!\left( B_{\Psi,\tau}^{s'+\gamma}(L_{\tau}(\partial\Omega));
		\Psi^{\partial\Omega},H^{s'}(\partial\Omega) \right) 
		\sim n^{-\gamma/2}\quad \text{as}\quad n\to\infty.
	\end{equation*}
\end{corollary}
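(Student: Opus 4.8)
The plan is to deduce the corollary from \autoref{prop:nterm} by calibrating the free parameters there and by identifying the Sobolev target space $H^{s'}(\partial\Omega)$ with a suitable member of the Besov-type scale. The first observation is that the norm equivalence \link{equiv_sobolev_norm} from \autoref{ass:basis}(VII) is exactly the case $p=q=2$ of the Besov-type (quasi-)norm defining $B_{\Psi,q}^{\alpha}(L_p(\partial\Omega))$: there the weight $2^{j(\alpha+2[1/2-1/p])q}$ collapses to $2^{2\alpha j}$ and the inner exponent $q/p$ equals $1$, so that
\[
	\norm{u\sep B_{\Psi,2}^{s'}(L_2(\partial\Omega))} \sim \norm{u\sep H^{s'}(\partial\Omega)}
\]
with constants independent of $u$, throughout the range of $s'$ in which \link{equiv_sobolev_norm} is valid. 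Since the best $n$-term error $\sigma_n(\,\cdot\,;\Psi^{\partial\Omega},\G)$ enters the target space $\G$ only through its (quasi-)norm, passing from the $H^{s'}$-norm to the equivalent $B_{\Psi,2}^{s'}(L_2)$-norm alters $\sigma_n$ by fixed multiplicative constants only. It therefore suffices to establish
\[
	\sigma_n\!\left(B_{\Psi,\tau}^{s'+\gamma}(L_\tau(\partial\Omega));\Psi^{\partial\Omega},B_{\Psi,2}^{s'}(L_2(\partial\Omega))\right) \sim n^{-\gamma/2}.
\]

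To this end I would apply \autoref{prop:nterm} with $\alpha:=s'$, source parameters $(p_0,q_0):=(\tau,\tau)$ and target parameters $(p_1,q_1):=(2,2)$, and first check admissibility in the sense of \link{parameter}. For the target tuple $(s',2,2)$ one has $1/p_1=1/2$, and $1/2\leq 1/2\leq s'/2+1/2$ holds because $s'\geq 0$, while the side condition on $q_1$ is met since $q_1=2$. For the source tuple the calibration $\tau=(\gamma/2+1/2)^{-1}$ from \link{eq:parameters} gives $1/\tau=\gamma/2+1/2$, whence $1/2\leq 1/\tau$ (as $\gamma\geq 0$) and $1/\tau\leq(s'+\gamma)/2+1/2$ (as $s'\geq 0$); moreover $\tau\leq 2$, so the side condition $q_0=\tau\leq 2$ demanded on the critical line $1/\tau=(s'+\gamma)/2+1/2$ (which occurs precisely when $s'=0$) is in force as well. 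Hence both tuples are admissible and \autoref{prop:nterm} applies.

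The decisive step is to determine which of the two regimes of \autoref{prop:nterm} governs the situation. Computing
\[
	2\cdot\max\!\left\{0,\frac{1}{p_0}-\frac{1}{p_1}\right\}=2\max\!\left\{0,\frac{1}{\tau}-\frac12\right\}=2\max\!\left\{0,\frac{\gamma}{2}\right\}=\gamma,
\]
shows that the calibration of $\tau$ places us exactly on the \emph{borderline} $\gamma=2\max\{0,1/p_0-1/p_1\}$, so the strict first part does not apply and the second part must be used. Its hypothesis $q_0\leq q_1$ holds since $\tau\leq 2$, and it delivers the rate $n^{-\min\{\gamma/2,\,1/q_0-1/q_1\}}$. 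Because $1/q_0-1/q_1=1/\tau-1/2=\gamma/2$, the two entries of the minimum coincide and the rate reduces to $n^{-\gamma/2}$, as claimed. I expect the only genuinely delicate point to be this borderline bookkeeping: one must verify both the critical-line side condition on $q_0$ and the hypothesis $q_0\leq q_1$, and then recognize that the apparent two-term minimum degenerates to the single value $\gamma/2$ --- a direct consequence of the tuning $\tau=(\gamma/2+1/2)^{-1}$. The degenerate instance $\gamma=0$, in which source and target coincide and $\sigma_n\sim n^{0}\sim 1$, is consistent with this.
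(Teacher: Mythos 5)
Your proposal is correct and coincides with the paper's own (one-line) proof, which simply applies \autoref{prop:nterm} with $p_0:=q_0:=\tau$, $p_1:=q_1:=2$, and $\alpha:=s'$; your additional work --- identifying $H^{s'}(\partial\Omega)$ with $B_{\Psi,2}^{s'}(L_2(\partial\Omega))$ via \link{equiv_sobolev_norm}, checking admissibility of both tuples, and verifying that the calibration $1/\tau=\gamma/2+1/2$ puts you on the borderline case where the minimum $\min\{\gamma/2,\,1/q_0-1/q_1\}$ degenerates to $\gamma/2$ --- is exactly the bookkeeping the paper leaves implicit.
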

\begin{proof}
Apply \autoref{prop:nterm} with $p_0:=q_0:=\tau$, $p_1:=q_1:=2$, and $\alpha:=s'$. 
\end{proof}

\subsection{Main regularity results}\label{sect:mainresults}
Throughout the whole section, $\partial\Omega$ denotes 
the patchwise smooth boundary of some three-dimensional 
domain $\Omega$, as described in \autoref{sect:boundaries}.
Moreover, we assume to be given a biorthogonal wavelet Riesz 
basis $\Psi=(\Psi^{\partial\Omega},\tilde{\Psi}^{\partial\Omega})$ 
on $\partial\Omega$, satisfying the requirements stated 
in \autoref{ass:basis}.

Given an operator $S$ and a right-hand side $g\colon 
\partial\Omega\nach \C$, we like to solve the equation
\begin{equation}\label{OperatorS}
	S(u)=g \quad \text{on} \quad \partial\Omega
\end{equation}
for $u\colon\partial\Omega\nach\C$. In particular, we 
are interested in the asymptotic behavior of the error of 
best $n$--term wavelet approximation to $u$, measured 
in the norm of $H^{s'}(\partial\Omega)$ for some $s' \geq 0$.

\begin{theorem}[{\cite[Theorem~5.6]{DahWei2015}}]\label{thm:general_eq}
Assume $\tilde{d}\in\N$, $k\in\{1,2,\ldots,\tilde{d}\}$, as well 
as $\rho\in(0,k)$, and let $(s,p,p)$ be an admissible tuple of 
parameters with $s>0$. Whenever the solution $u$ to \link{OperatorS} 
is contained in the intersection of $B_{\Psi,p}^s(L_p(\partial\Omega))$ 
and $X^k_\rho(\partial\Omega)$, then it also belongs to the Besov-type 
space $B_{\Psi,\tau}^\alpha(L_\tau(\partial\Omega))$ for all tuples 
$(\alpha,\tau,\tau)$ with
\begin{equation*}
	\frac{1}{\tau} = \frac{\alpha}{2}+\frac{1}{2}
	\quad\, \text{and} \quad\,
	0 \leq \alpha < 2 \alpha^\star, 
	\quad\, \text{where} \quad\,
	\alpha^\star = \min\!\left\{\rho, k-\rho, s-\left(\frac{1}{p}-\frac{1}{2} \right)\right\}.
\end{equation*}	
Moreover, for every $0 \leq s' < \min\!\left\{ 3/2, s_{\partial\Omega} \right\}$,
satisfying
\begin{equation}\label{bound_s}
	s-s' \geq 2 \left( \frac{1}{p} - \frac{1}{2}  \right),
\end{equation}
we have $\sigma_n\!\left( u; \Psi^{\partial\Omega}, 
H^{s'}(\partial\Omega) \right) \lesssim n^{-\gamma/2}$
as $n\nach\infty$ for all $\gamma < \gamma^\star$, where
	\begin{equation*}
		\gamma^\star := s-s' + \Theta \cdot (2\alpha^\star - s)\geq 0
		\qquad \text{and} \qquad
		\Theta := 1- \frac{s'}{s-2 \left( 1/p - 1/2 \right)} \in [0,1].
	\end{equation*}
\end{theorem}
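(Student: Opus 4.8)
\emph{Reformulation.} The assertion is at heart a function-space embedding: since $u$ is \emph{assumed} to lie in $B^s_{\Psi,p}(L_p(\partial\Omega))\cap X^k_\rho(\partial\Omega)$, the operator $S$ and its invertibility (\autoref{prop:Elschner}) enter only to \emph{produce} such a $u$ and play no role in the argument itself. The plan is to prove the embedding $B^s_{\Psi,p}(L_p(\partial\Omega))\cap X^k_\rho(\partial\Omega)\hookrightarrow B^\alpha_{\Psi,\tau}(L_\tau(\partial\Omega))$ for every $\alpha<2\alpha^\star$ and then to read off the approximation rate by interpolation. The decisive simplification is that on the diagonal $1/\tau=\alpha/2+1/2$ the dyadic weight exponent $\alpha+2(1/2-1/\tau)$ in the defining norm of $B^\alpha_{\Psi,\tau}(L_\tau(\partial\Omega))$ vanishes, so the entire first claim reduces to the finiteness of the single sum $\sum_{j\geq j^\star}\sum_{\xi\in\nabla_j^{\partial\Omega}}\abs{\distr{u}{\tilde\psi^{\partial\Omega}_{j,\xi}}}^\tau$.

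\emph{Splitting the coefficients.} At each level $j$ I would split $\nabla_j^{\partial\Omega}$ into the \emph{interior} indices, whose dual wavelet is supported inside a single patch $F_i$, and the \emph{near-interface} indices meeting $\bigcup_i\partial F_i$. For the interior indices the vanishing-moment property \link{vanish} of order $\tilde d\geq k$ turns $\distr{u}{\tilde\psi^{\partial\Omega}_{j,\xi}}$ into a localized Taylor remainder controlled by the weighted derivatives in \link{eq:normCn}; a dyadic decomposition according to the distance of $\supp\tilde\psi^{\partial\Omega}_{j,\xi}$ to the edges, summed over the $\sim 2^{2j}$ interior indices, lets the angular weight $q^{\beta_\phi-\rho}$ (for derivative orders $\beta_\phi$ up to $k$) produce the restriction $\alpha<2(k-\rho)$. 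For the near-interface indices I would instead exploit that \link{number_wavelets1} caps their number by $\sim 2^j$ per level; estimating their coefficients through the degenerate edge weight and summing only $\sim 2^j$ terms yields the complementary restriction $\alpha<2\rho$. Finally the assumed membership $u\in B^s_{\Psi,p}(L_p(\partial\Omega))$ caps the globally attainable smoothness and contributes $\alpha<2(s-(1/p-1/2))$; intersecting the three ranges gives exactly $\alpha<2\alpha^\star=2\min\{\rho,k-\rho,s-(1/p-1/2)\}$.

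\emph{From smoothness to rate.} For the second claim I would combine the two memberships now available, namely $u\in B^s_{\Psi,p}(L_p(\partial\Omega))$ and $u\in B^\beta_{\Psi,\sigma}(L_\sigma(\partial\Omega))$ for every $\beta<2\alpha^\star$ with $1/\sigma=\beta/2+1/2$. Real interpolation between these two points of the DeVore--Triebel diagram with parameter $\Theta$ places $u$ in a space of smoothness $(1-\Theta)s+\Theta\cdot 2\alpha^\star=s'+\gamma^\star$; the admissible range $\Theta\in[0,1]$ is precisely what the hypotheses $s'\geq 0$ and \link{bound_s} (that is, $s'\leq s-2(1/p-1/2)$) guarantee. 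A concluding embedding onto the adaptivity line, whose necessary slack is provided by the strict inequality $\gamma<\gamma^\star$, places $u$ in $B^{s'+\gamma}_{\Psi,\tau}(L_\tau(\partial\Omega))$ with $1/\tau=\gamma/2+1/2$, so that the Corollary to \autoref{prop:nterm} delivers $\sigma_n\!\left(u;\Psi^{\partial\Omega},H^{s'}(\partial\Omega)\right)\lesssim n^{-\gamma/2}$.

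\emph{Main obstacle.} I expect the hard part to be the near-edge and near-vertex coefficient estimate. One must track how the anisotropic weights of \link{eq:normCn}---in particular the factor $q^{\beta_\phi-\rho}$, which degenerates as $q\to 0$, together with the vertex weight $(1+1/r)^\rho$---translate into decay of the $\sim 2^j$ interface coefficients, and carry out the summation in the local polar coordinates on each cone $\partial\Co_n$ uniformly over the finitely many vertices of $\Omega$. It is this step that produces the balance $\min\{\rho,k-\rho\}$ and hence the sharpness of $\alpha^\star$; by comparison the interior estimate and the final interpolation are routine applications of \autoref{ass:basis} and of standard embedding and interpolation results for the scales $B^\alpha_{\Psi,q}(L_p(\partial\Omega))$.
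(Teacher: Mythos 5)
The paper offers no proof of this theorem itself---it is imported verbatim from \cite[Theorem~5.6]{DahWei2015}---but your proposal reconstructs essentially the strategy of that cited proof: the first claim is reduced, via the vanishing of the weight exponent on the diagonal $1/\tau=\alpha/2+1/2$, to an embedding established by estimating wavelet coefficients split according to whether their supports meet the patch interfaces (vanishing moments \link{vanish} combined with the weighted norm \link{eq:normCn} for interior wavelets, the counting bound \link{number_wavelets1} for interface ones), and the rate then follows by interpolating with parameter $\Theta$ onto the adaptivity line through $H^{s'}$ and invoking \autoref{prop:nterm}. In particular, your observation that $\Theta=1-s'/\bigl(s-2(1/p-1/2)\bigr)$ places the interpolated space exactly on that line with excess smoothness $\gamma^\star$, the strict inequality $\gamma<\gamma^\star$ providing the slack needed to avoid the borderline case of \autoref{prop:nterm}, is precisely how the cited argument concludes.
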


\begin{figure}[ht!]
\begin{center}
\begin{tikzpicture}
\draw [->,>=stealth'] (0,-0.05) -- (0,6) node[above] {$\alpha$};
\draw [->,>=stealth'] (-0.05,0) -- (5,0) node[right] {$\frac{1}{p}$};

\draw [gray, thick] (1,3) -- (4,6);
\shade [left color=gray!40, right color=gray!5, shading angle=180] (1,3)--(1,6)--(4,6)--(1,3);

\draw [thick] (1,0) -- (5,4);
\draw [very thick] (1,0) -- (1,6);
\fill (1,0) circle (2pt) node[anchor=south east] {$L_2$};

\draw (-0.3,3) node {$s'$};
\draw [dashed] (-0.05,3) -- (1,3);
\draw (1,0) -- (1,-0.05);
\draw (1,-0.5) node {$\frac{1}{2}$};
\fill (1,3) circle (2pt) node[anchor=north east] {$H^{s'}$};

\draw (-0.05,4.5) node[anchor=east] {$s_{\partial\Omega}$} -- (0.05,4.5);

\draw (-0.05,3.5) node[anchor=east] {$\alpha+\gamma$};
\draw [dashed] (-0.05,3.5) -- (3,3.5);
\fill (3,3.5) circle (2pt) node[anchor=south] {$B^{\alpha+\gamma}_{\Psi,p_0}(L_{p_0})$};
\draw [dashed] (3,-0.05) -- (3,0.7);
\draw [dashed] (3,1.4) -- (3,3.5);
\draw (3,-0.5) node {$\frac{1}{p_0}$};

\draw [->,>=stealth'] (3,3.5) -- (4,3.5);
\draw [dotted] (4,3.5)--(4.5,3.5);
\draw [->,>=stealth'] (3,3.5) -- (2,2.5);
\draw [dotted] (2,2.5)--(1,1.5);

\draw (-0.3,1) node {$\alpha_\tau$};
\draw [dashed] (-0.05,1.1) -- (2.1,1.1);
\fill (2.1,1.1) circle (2pt) node[anchor=west] {$B^{\alpha_\tau}_{\Psi,\tau}(L_\tau)$};
\draw [dashed] (2.1,-0.05) -- (2.1,1.1);
\draw (2.1,-0.5) node {$\frac{1}{\tau}$};
\end{tikzpicture}
		\hfill
\begin{tikzpicture}
\draw [->,>=stealth'] (0,-0.05) -- (0,6) node[above] {$\alpha$};
\draw [->,>=stealth'] (-0.05,0) -- (7,0) node[right] {$\frac{1}{p}$};

\shade [left color=gray!40, right color=gray!5, shading angle=180] (1,1)--(1,6.5)--(6.5,6.5)--(1,1);

\draw [thin] (1,0) -- (6.5,5.5);
\draw [thin] (1,0) -- (1,6);
\draw [thin] (1,1) -- (5.5,5.5);
\fill (1,0) circle (2pt) node[anchor=south east] {$L_2$};

\draw (-0.3,2.5) node {$s$};
\draw [dashed] (-0.05,2.5) -- (1,2.5);
\draw (1,0) -- (1,-0.05);
\draw (1,-0.5) node {$\frac{1}{2}$};
\fill (1,2.5) circle (2pt) node[anchor=south east] {$H^{s}$};

\draw (-0.3,1) node {$s'$};
\draw [dashed] (-0.05,1) -- (1,1);
\fill (1,1) circle (2pt) node[anchor=south east] {$H^{s'}$};

\draw (-0.3,5) node {$2s$};
\draw [dashed] (-0.05,5) -- (6,5);
\draw [dashed] (6,-0.05) -- (6,5);
\draw (6,-0.5) node {$\frac{1}{\tau}$};

\fill (6,5) circle (2pt) node[anchor=north west] {$B^\alpha_{\Psi,\tau}(L_\tau)$};
\draw [thick, dotted] (1,2.5) -- (4,4);
\draw [->,>=stealth', dotted] (4,4) -- (5.8,4.9);

\draw (-0.3,4) node {$\alpha_\theta$};
\draw [dashed] (-0.05,4) -- (4,4);
\fill (4,4) circle (2pt) node[anchor=south east] {$B^{\alpha_\theta}_{\Psi,p_\theta}(L_{p_\theta})$};
\draw [dashed] (4,-0.05) -- (4,4);
\draw (4,-0.5) node {$\frac{1}{p_\theta}$};
\end{tikzpicture}
	\end{center}
\caption{DeVore-Triebel diagrams visualizing the area of admissible 
parameters, as well as embeddings related to 
\autoref{prop:nterm} (left), and \autoref{thm:general_eq} (right).\label{fig:DeVoreTriebel2}}
\end{figure}
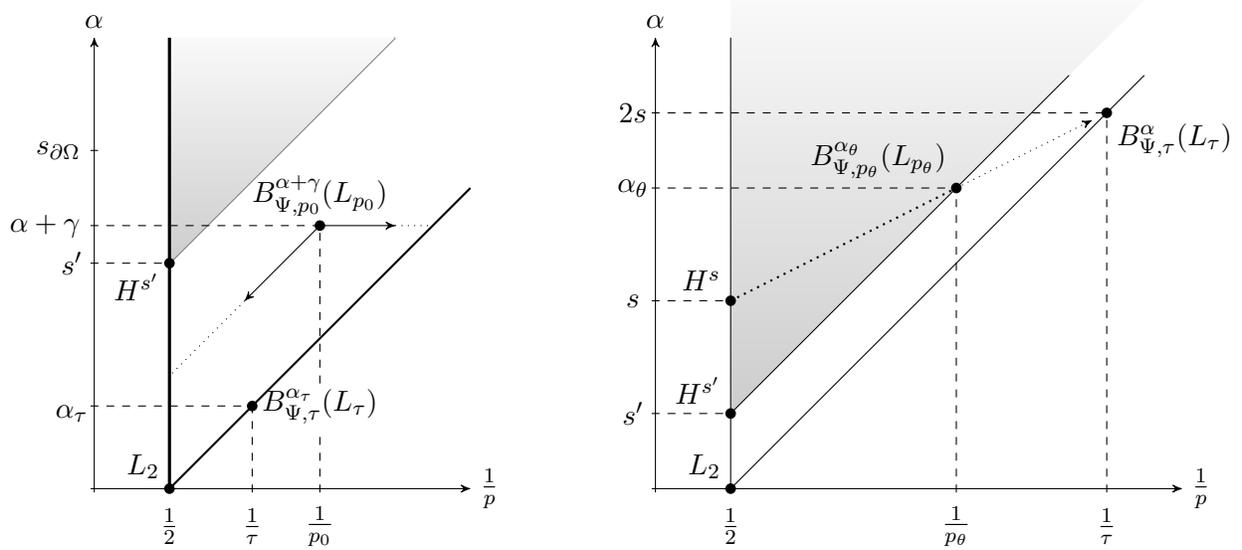

The second DeVore-Triebel diagram in \autoref{fig:DeVoreTriebel2} 
illustrates a special case of \autoref{thm:general_eq}. There, we have 
chosen $p=2$ and $0<s'<s<\min\{\rho,k-\rho,3/2,s_{\partial\Omega}\}$, 
such that particularly $\alpha^\star=s$ and
$B_{\Psi,p}^s(L_p(\partial\Omega))=H^s(\partial\Omega)$. The dotted line corresponds to the scale of spaces 
$B_{\Psi,p_\Theta}^{\alpha_{\Theta}}(L_{p_\Theta}(\partial\Omega))$ 
which can be reached by complex interpolation of $H^s(\partial\Omega)$ 
and $B_{\Psi,\tau}^\alpha(L_\tau(\partial\Omega))$. 
Interpolation is necessary since only those spaces which belong to the 
shaded area can be embedded into $H^{s'}(\partial\Omega)$. For this 
special choice of the parameters we obtain that $u\in H^s(\partial\Omega) 
\cap X^k_\rho(\partial\Omega)$ can be approximated in the norm of 
$H^{s'}(\partial\Omega)$ at a rate arbitrarily close to $\gamma^\star/2 = s-s'$, 
whereas the rate of convergence for best $n$--term wavelet approximation 
to an arbitrary function $u\in H^s(\partial\Omega)$ is $(s-s')/2$; see 
\autoref{prop:nterm}. Hence, incorporating the additional knowledge 
about weighted Sobolev regularity (membership in $X^k_\rho
(\partial\Omega)$) allows to improve the rate of convergence 
up to a factor of two. 

For the special case of the double layer operator $S:=S_{\mathrm{DL}}$, 
we obtain the following theorem. Its proof is based on a combination of Propositions \ref{prop:Verchota} and \ref{prop:Elschner} with \autoref{thm:general_eq}; see \cite{DahWei2015}.

\begin{theorem}[{\cite[Theorem~5.8]{DahWei2015}}]\label{thm:doublelayer}
Let $s\in(0,1)$, as well as $k\in\N$, and $\rho\in(0,\min\{\rho_0,k\})$ 
for some $\rho_0 \in (1, 3/2)$ depending on the surface $\partial\Omega$. 
Moreover let $\alpha$ and $\tau$ be given such that
\begin{equation*}
	\frac{1}{\tau} = \frac{\alpha}{2}+\frac{1}{2}
	 \quad\, \text{and} \quad\,
	 0 \leq \alpha < 2 \cdot \min\{\rho, k-\rho, s\}
\end{equation*}	
and let the Besov-type space $B_{\Psi,\tau}^\alpha(L_\tau(\partial\Omega))$ 
be constructed with the help of a wavelet basis $\Psi=(\Psi^{\partial\Omega},
\tilde{\Psi}^{\partial\Omega})$ possessing vanishing moments of order 
$\tilde{d} \geq k$. Then, for every right-hand side $g\in H^s(\partial\Omega)
\cap X_\rho^k(\partial\Omega)$, the double layer equation \link{DoubleLayerProb} 
has a unique solution $u\in B_{\Psi,\tau}^\alpha(L_\tau(\partial\Omega))$.
Furthermore, if $s'\in[0,s]$, then the error of the best $n$--term wavelet 
approximation to $u$ in the norm of $H^{s'}(\partial\Omega)$ satisfies
\begin{equation*}
	\sigma_n\!\left( u; \Psi^{\partial\Omega}, H^{s'}(\partial\Omega) \right) \lesssim n^{-\gamma/2}
	\qquad \text{for all} \qquad
	\gamma < 2 \cdot \left(1-\frac{s'}{s} \right) \cdot \min\{\rho,k-\rho,s\}.
\end{equation*}
\end{theorem}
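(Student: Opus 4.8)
The plan is to reduce everything to the general regularity result \autoref{thm:general_eq} by verifying its hypotheses for $S:=S_{\mathrm{DL}}$ with the particular choice $p=2$. The first task is to establish that the solution $u$ exists, is unique, and lies in the intersection $H^s(\partial\Omega)\cap X^k_\rho(\partial\Omega)$. Since $g\in H^s(\partial\Omega)$ and $s\in(0,1)\subset[0,1]$, \autoref{prop:Verchota} yields a unique $u\in H^s(\partial\Omega)$ with $S_{\mathrm{DL}}(u)=g$. On the other hand, the hypotheses $\rho\in(0,\min\{\rho_0,k\})$ guarantee $0\leq\rho<\rho_0$ and $\rho\leq k$, so \autoref{prop:Elschner} produces a unique $\tilde u\in X^k_\rho(\partial\Omega)$ solving the same equation. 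Both $u$ and $\tilde u$ belong to $L_2(\partial\Omega)$ (the embedding $X^k_\rho\hookrightarrow L_2$ being built into the norms \link{eq:normCn} and \link{def_X}), and applying \autoref{prop:Verchota} once more with $s=0$ shows the $L_2$-solution is unique. Hence $u=\tilde u$ and $u\in H^s(\partial\Omega)\cap X^k_\rho(\partial\Omega)$.

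With this in hand, I would invoke \autoref{thm:general_eq} using $p:=2$. Then the tuple $(s,2,2)$ is admissible: the constraint $\tfrac12\leq\tfrac1p\leq\tfrac{s}{2}+\tfrac12$ holds because $s>0$, and since $1/p=1/2\neq s/2+1/2$ we are in the regime where $q\leq\infty$ is allowed. The identification $B^s_{\Psi,2}(L_2(\partial\Omega))=H^s(\partial\Omega)$ (noted in the discussion of \autoref{thm:general_eq}) then places $u$ in the required intersection $B^s_{\Psi,2}(L_2(\partial\Omega))\cap X^k_\rho(\partial\Omega)$. Because $1/p-1/2=0$ for $p=2$, the threshold simplifies to $\alpha^\star=\min\{\rho,k-\rho,s\}$, so the admissible range $0\leq\alpha<2\alpha^\star$ coincides with the hypothesis of the present theorem and membership $u\in B^\alpha_{\Psi,\tau}(L_\tau(\partial\Omega))$ follows. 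Here one should also record that $\rho\in(0,\min\{\rho_0,k\})$ forces $\rho<k$ (using $\rho_0<3/2$), matching the requirement $\rho\in(0,k)$ of the general theorem, and that $\tilde d\geq k$ supplies the vanishing moments $k\in\{1,\dots,\tilde d\}$.

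For the approximation rate, I would again specialize \autoref{thm:general_eq} to $p=2$. The side condition $s-s'\geq 2(1/p-1/2)$ from \link{bound_s} becomes simply $s'\leq s$, which is exactly $s'\in[0,s]$; moreover $s'\leq s<1\leq\min\{3/2,s_{\partial\Omega}\}$ since $s_{\partial\Omega}\geq1$, so the range restriction on $s'$ is met. The interpolation parameter reduces to $\Theta=1-s'/s$, and a short computation with $\alpha^\star=\min\{\rho,k-\rho,s\}$ gives
\begin{equation*}
	\gamma^\star = (s-s') + \Theta\,(2\alpha^\star-s)
	= 2\left(1-\frac{s'}{s}\right)\min\{\rho,k-\rho,s\},
\end{equation*}
after cancelling the terms linear in $s'$; this is precisely the claimed threshold $\gamma<\gamma^\star$. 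The estimate $\sigma_n(u;\Psi^{\partial\Omega},H^{s'}(\partial\Omega))\lesssim n^{-\gamma/2}$ is then inherited directly from \autoref{thm:general_eq}.

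The main obstacle, such as it is, is bookkeeping rather than analysis: one must check that every parameter constraint of \autoref{thm:general_eq} (admissibility of $(s,2,2)$, the range $\rho\in(0,k)$, the feasibility of $s'$) is implied by the more restrictive hypotheses recorded here, and---most importantly---that the two a priori distinct solutions furnished by \autoref{prop:Verchota} and \autoref{prop:Elschner} are in fact identical. The latter hinges entirely on uniqueness of the $L_2$-solution, which is where \autoref{prop:Verchota} at $s=0$ does the decisive work; without it one could not conclude that the $H^s$-regular solution also enjoys the weighted Sobolev regularity needed to feed into the general theorem.
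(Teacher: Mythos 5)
Your proposal is correct and takes essentially the same route as the paper, whose proof (with details deferred to \cite{DahWei2015}) is stated to be exactly the combination of \autoref{prop:Verchota} and \autoref{prop:Elschner} with \autoref{thm:general_eq}. Your identification of the $H^s$- and $X^k_\rho$-solutions via $L_2$-uniqueness, together with the parameter bookkeeping for $p=2$ (admissibility of $(s,2,2)$, $\alpha^\star=\min\{\rho,k-\rho,s\}$, and $\gamma^\star=2\left(1-\frac{s'}{s}\right)\alpha^\star$), fills in precisely the intended details.
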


\section{Adaptive wavelet methods for integral equations}\label{sec:AWEM}
Adaptive wavelet methods rely on an iterative solution method for 
the \emph{continuous boundary integral equation} \link{eq:generaleq} 
under consideration, expanded with respect to the wavelet basis. To 
this end, we renormalize the wavelet basis $\Psi$ w.r.t.\ the underlying
energy space. Then \link{eq:generaleq} is equivalent to the well--posed problem of finding $u=\Psi^{\partial\Omega}{\bf u}$ such that 
the \emph{infinite dimensional} system of linear equations
\begin{equation}	\label{eq:discrete problem}
  {\bf Su} = {\bf f}, \quad \text{where} \quad {\bf S}:=\distr{ S(\Psi^{\partial\Omega})}{\Psi^{\partial\Omega}} \quad \text{and} 
  	\quad {\bf f}:= \distr{f}{\Psi^{\partial\Omega}},
\end{equation}
holds. For approximately solving this infinite dimensional system of
linear equations, one has to perform matrix-vector multiplications by 
means of adaptive applications of the matrix ${\bf S}$ under consideration. 
The basic building blocks $\coarse$, $\apply$, $\rhs$, and $\solve$, which 
are needed to arrive at an adaptive algorithm of optimal complexity, have 
been introduced in \cite{CDD01,CDD02}. Our particular implementation 
is based on piecewise constant wavelets as outlined in \cite{DHS1,HU}, 
see also \cite{GantStev} for related results. In particular, we restrict the 
set of active wavelet functions to tree constraints which ensures the
method's efficient implementation. 
Notice that the piecewise constant wavelets we use here are discontinuous. This implies that the norm 
equivalence \link{equiv_sobolev_norm} only holds in the range 
$-1/2 < s < 1/2$. 
Nevertheless, this limitation in
the basis functions' smoothness does not change the rates of the
best $n$--term approximations in the range of Sobolev spaces 
which can be characterized. 

The specific adaptive algorithm we use has been proposed in \cite{G,GHS} 
and is similar to classical methods which consist of the following steps:
\begin{center}
  \boxed{\text{\sc Solve}}\quad$\longrightarrow$\quad
  \boxed{\text{\sc Estimate}}\quad$\longrightarrow$\quad
  \boxed{\text{\sc Mark}}\quad$\longrightarrow$\quad
  \boxed{\text{\sc Refine}}
\end{center}
For a given (finite) index set $\mathcal{T}\subset\nabla^{\partial
\Omega}$, we \emph{solve} the Galerkin system \link{eq:discrete 
problem} via ${\bf u}_{\mathcal{T}} = \solve[\mathcal{T}]$. 
Then we \emph{estimate} the (infinite) residuum 
${\bf r} := {\bf f}-{\bf S}{\bf u}_{\mathcal{T}}$ with sufficient accuracy $\delta>0$ 
by computing
\begin{equation*}
	{\bf r}_{\mathcal{T}'} = \rhs[\delta/2]-\apply[\delta/2,{\bf u}_{\mathcal{T}}]
\end{equation*}
relative to a finite index set $\mathcal{T}\subset\mathcal{T}'
\subset\nabla^{\partial\Omega}$ such that 
\begin{equation*}
  	\norm{ {\bf r}-{\bf r}_{\mathcal{T}'} }_2 \leq \delta.
\end{equation*}
Herein, $\rhs[\delta/2]$ produces a finitely supported 
approximation of the right-hand side with accuracy $\delta/2$ 
and $\apply[\delta/2,{\bf u}_{\mathcal{T}}]$ approximates the 
matrix-vector product ${\bf Su}_{\mathcal{T}}$ with
accuracy $\delta/2$. In order to have $\norm{ {\bf r}_{\mathcal{T}'} }_2$ 
proportional to $\norm{ {\bf r} }_2$, i.e.,
\begin{equation}\label{eq:equivalence}
  (1-\omega) \, \norm{ {\bf r}_{\mathcal{T}'} }_2 \leq \norm{ {\bf r} }_2
  	\leq (1+\omega)\, \norm{ {\bf r}_{\mathcal{T}'} }_2
\end{equation}
for fixed $0<\omega<1$, we apply the following 
iteration for some initial precision $\delta_{\mathrm{init}}$:
\begin{equation}\label{eq:grow}
\left.\begin{array}{l}
\text{set}\ \delta = \delta_{\mathrm{init}}\\[1ex]
\text{do}\\[1ex]
\qquad\text{set}\ \delta = \delta/2\\[1ex]
\qquad\text{calculate}\ {\bf r}_{\mathcal{T}'} = \rhs[\delta/2]
	-\apply[\delta/2,{\bf u}_{\mathcal{T}}]\\[1ex]
\text{until}\ \delta \leq \omega \, \norm{ {\bf r}_{\mathcal{T}'} }_2
\end{array}\qquad\right\}
\end{equation}
The until-clause $\delta \leq \omega \, \norm{ {\bf r}_{\mathcal{T}'} }_2$ 
causes that this iteration terminates when 
\link{eq:equivalence} holds.

The supporting index set $\mathcal{T}'$ of the 
approximate residuum ${\bf r}_{\mathcal{T}'}$ enlarges
the original index set $\mathcal{T}$ enough to ensure that
the Galerkin solution with respect to $\mathcal{T}'$ would 
reduce the error by a constant factor. Nevertheless, to 
control the complexity, we have to coarsen the index set 
$\mathcal{T}'$ such that
\begin{equation*}
  \norm{ {\bf r}_{\mathcal{T}''} }_2 \leq \theta \, \norm{ {\bf r}_{\mathcal{T}'} }_2
\end{equation*}
for fixed $0<\theta<1$ sufficiently small. This is done by calling
\begin{equation*}
  {\bf r}_{\mathcal{T}''} = \coarse[\theta,{\bf r}_{\mathcal{T}'}].
\end{equation*}
It combines the steps \emph{mark} and \emph{refine} since 
the new index set $\mathcal{T}''$ enlarges the original index 
set $\mathcal{T}$ which corresponds to mesh refinement. 
We emphasize that $\mathcal{T}''$ is still large enough to 
guarantee the convergence of the algorithm when starting 
the procedure again with $\mathcal{T}:=\mathcal{T}''$. For 
all the details of the particular implementation, we refer the 
reader to \cite{Manu}.

\section{Numerical results}\label{sec:numerix}
\subsection{Right-hand side with point singularity}
We will present results for the Laplace equation solved by 
the second kind Fredholm integral equation \link{DoubleLayerProb}
for the double layer potential operator \link{DLP}. We choose Fichera's 
vertex as domain under consideration, i.e., $\Omega := (0,1)^3\setminus (0,0.5]^3$.
Its surface is parametrized by 12 patches in accordance with \autoref{sect:boundaries}. We consider non-smooth Dirichlet data of the form
\begin{equation*}
	g(x) := |x-(0.5,0.5,0.5)|^{-\alpha}
\end{equation*}
for $\alpha = 0.5$ and $\alpha = 0.75$. These Dirichlet data
admit a point singularity in the reentrant corner $(0.5, 0.5, 0.5)$
of the Fichera vertex. 

According to the subsequent \autoref{thm:example}, for these right-hand sides we can expect a rate of convergence of least $n^{-(1-\alpha)}$ when 
using an optimal adaptive scheme.  In contrast, we 
expect only half the rate, i.e., $n^{-(1-\alpha)/2}$, when using uniform 
refinement.

\begin{theorem}\label{thm:example}
Let $\partial\Omega\subset\R^3$ denote a Lipschitz surface according to \autoref{sect:boundaries}, assume $\nu$ to be one of its vertices, and let
$1/2 \leq \alpha < 1$. Moreover, denote by $u$ the unique solution 
to \link{DoubleLayerProb} with right-hand side given by 
\begin{equation*}
g(x):=\abs{x-\nu}^{-\alpha}, \quad x\in\partial\Omega.
\end{equation*}
Then the error of the best $n$--term wavelet approximation to $u$ w.r.t.\ $L_2(\partial\Omega)$ converges at least at a rate of $n^{-(\alpha-1)}$, while the corresponding rate for uniform approximation is limited to $n^{-(1-\alpha)/2}$.
\end{theorem}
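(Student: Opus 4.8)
The plan is to feed the right-hand side into the regularity machinery of \autoref{thm:doublelayer} and to read off the uniform rate from the Sobolev isomorphism of \autoref{prop:Verchota}. The whole argument is organized around one elementary observation: in the local polar coordinates $(r,\phi)$ on the cone $\partial\Omega$ attached to the singular vertex $\nu$, the Euclidean distance of a face point to the apex equals $r$, so $g=\abs{x-\nu}^{-\alpha}$ reads $g=r^{-\alpha}$, independent of $\phi$. Near every other vertex $g$ is $C^\infty$, so all the regularity analysis concentrates at $\nu$, and the task splits into two regularity statements for $g$ plus an application of the two quoted results.

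First I would determine the plain Sobolev smoothness. A function behaving like $r^{-\alpha}$ at a point of a two-dimensional manifold lies in $H^s$ exactly for $s<1-\alpha$: after a smooth cut-off the Fourier transform of $r^{-\alpha}$ decays like $\abs{\xi}^{\alpha-2}$ in $\R^2$, so the high-frequency part
\[
\int_{\abs{\xi}\geq 1} \abs{\xi}^{2s}\,\abs{\xi}^{2\alpha-4}\d\xi
\]
is finite if and only if $s+\alpha<1$. Hence $g\in H^s(\partial\Omega)$ for every $s<1-\alpha$, while $g\notin H^s$ for $s>1-\alpha$. Since $1/2\leq\alpha<1$ gives $1-\alpha\in(0,1/2]$, this range is admissible both in \autoref{prop:Verchota} and in the norm equivalence \link{equiv_sobolev_norm}.

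The main technical step is to show $g\in X^k_\rho(\partial\Omega)$ for all $\rho<1-\alpha$ and any $k\in\N$ with $\rho<k$. Here I would insert $g=r^{-\alpha}$ into the weighted norm \link{eq:normCn}. Because $g$ is $\phi$-independent, only multi-indices $\beta=(\beta_r,0)$ with $1\leq\beta_r\leq k$ survive, and a short computation gives
\[
(1+1/r)^\rho\,(q\,r)^{\beta_r}\,\partial_r^{\beta_r}\big(q^{-\rho}r^{-\alpha}\big)\;\sim\; q^{\,\beta_r-\rho}\,r^{-\alpha-\rho}\qquad\text{as}\qquad r\nach 0.
\]
Squaring and integrating against the surface element $r\,\d r\,\d\phi$ over a sector, the angular factor $\int q^{2(\beta_r-\rho)}\d\phi$ is harmless since $\beta_r\geq 1>\rho$, whereas the radial factor $\int_0 r^{1-2\alpha-2\rho}\,\d r$ converges precisely when $\rho<1-\alpha$. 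The smooth localizing cut-offs $\varphi_n$ only add strictly less singular contributions, and $\varphi_m g$ is $C^\infty$ at the non-singular vertices; together with \link{def_X} this yields finiteness of the full norm. This weighted-norm bookkeeping, and in particular checking that the $q$-weights never tighten the threshold below $\rho<1-\alpha$, is the part I expect to require the most care.

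Finally I would apply \autoref{thm:doublelayer} with $s'=0$ (so that $H^{s'}=L_2$), $k=1$, a basis with $\tilde{d}\geq 1$, and with the Sobolev parameter and the weight $\rho$ both tending to $1-\alpha$ from below. Since $\alpha\geq 1/2$ gives $1-\alpha\leq\alpha< k-\rho$, the minimum $\min\{\rho,k-\rho,s\}$ is governed by $\rho$ and the Sobolev index and approaches $1-\alpha$, so the admissible range $\gamma<2\min\{\rho,k-\rho,s\}$ exhausts all $\gamma<2(1-\alpha)$. This yields $\sigma_n\!\left(u;\Psi^{\partial\Omega},L_2(\partial\Omega)\right)\lesssim n^{-\gamma/2}$ for every $\gamma/2<1-\alpha$, i.e.\ the claimed adaptive rate $n^{-(1-\alpha)}$. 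For the uniform rate I would invoke \autoref{prop:Verchota}: as $S_{\mathrm{DL}}\colon H^s\nach H^s$ is an isomorphism for $s\in[0,1]$, the solution $u=S_{\mathrm{DL}}^{-1}g$ inherits the Sobolev smoothness of $g$ exactly (if $u\in H^s$ for some $s>1-\alpha$, then $g=S_{\mathrm{DL}}u\in H^s$, a contradiction). Hence the maximal Sobolev regularity of $u$ is $1-\alpha$, and uniform (full-grid) wavelet approximation in $L_2$ converges at the order $n^{-s/2}$ with $s=1-\alpha$ and no faster, i.e.\ it is limited to $n^{-(1-\alpha)/2}$, exactly half the adaptive rate.
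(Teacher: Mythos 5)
Your overall route is the same as the paper's: establish $g\in H^s(\partial\Omega)\cap X^1_\rho(\partial\Omega)$ for all $s,\rho<1-\alpha$ together with sharpness of the Sobolev exponent, transfer the Sobolev cap to $u=S_{\mathrm{DL}}^{-1}g$ via \autoref{prop:Verchota}, and apply \autoref{thm:doublelayer} with $k=1$, $s=\rho=1-\alpha-\epsilon$, $s'=0$. Indeed, your weighted-norm bookkeeping is essentially Step 2 of \autoref{prop:regularity} in the \aref{sect:appendix}, and your parameter choices in the final step coincide with the paper's.

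However, there is a genuine gap in the one line where you claim $g\notin H^s$ for $s>1-\alpha$ --- and this is exactly the point on which the claimed limitation of the uniform rate rests, and where the paper invests most of its technical work (\autoref{lem:lowerbound} and Step 3 of \autoref{prop:regularity}). Your Fourier argument lives on $\R^2$: it shows that the particular rotationally symmetric cut-off extension $\tilde g(y)=\zeta(\abs{y})\,\abs{y}^{-\alpha}$ fails to belong to $H^s(\R^2)$ for $s\geq 1-\alpha$. But $g$ restricted to a flat patch $F$ with corner $\nu$ is defined only on a sector, and $g|_F\in H^s(F)$ means that \emph{some} extension of $g|_F$ lies in $H^s(\R^2)$; disqualifying one specific extension proves nothing about the restriction. (Note also that near $\nu$ the surface is a polyhedral cone, not a plane, so you cannot identify a full neighborhood of $\nu$ on $\partial\Omega$ with a neighborhood of the origin in $\R^2$; a bi-Lipschitz flattening would distort $g$ and destroy the exact Fourier computation.) Membership passes to restrictions, non-membership does not. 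The paper resolves this intrinsically: \autoref{lem:lowerbound} provides a pointwise lower bound for first-order differences, which yields $\omega(g,t,F)_2\gtrsim t^{1-\alpha}$ on a sub-sector and hence divergence of the intrinsic $B^{1-\alpha}_{2,2}(F)$-norm. A quicker patch-intrinsic repair of your argument would be the critical Sobolev embedding $H^s(F)\hookrightarrow L_{2/(1-s)}(F)$: membership $g\in H^s(F)$ would force $\int_0^{R} r^{-2\alpha/(1-s)}\, r \d r<\infty$, which fails precisely when $s\geq 1-\alpha$. Without some such intrinsic argument, your proposal proves the adaptive (best $n$-term) rate but not the asserted bound $n^{-(1-\alpha)/2}$ for uniform refinement.
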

\begin{proof}
From \autoref{prop:regularity} in the \aref{sect:appendix} we have that $\sup\{s>0 \sep g\in H^s(\partial\Omega)\}=1-\alpha$ and thus \autoref{prop:Verchota} implies that the same upper bound holds for the solution $u$. 
In turn, this proves the limitation for the rate of convergence using uniform refinement. On the other hand, \autoref{prop:regularity} also yields that we can apply \autoref{thm:doublelayer} with $k:=1$ and $s:=\rho:=1-\alpha-\epsilon$, where $\epsilon>0$ can be chosen arbitrarily small, as well as $s':=0$. This shows that the best $n$--term rate is at least twice as high which completes the proof.
\end{proof}

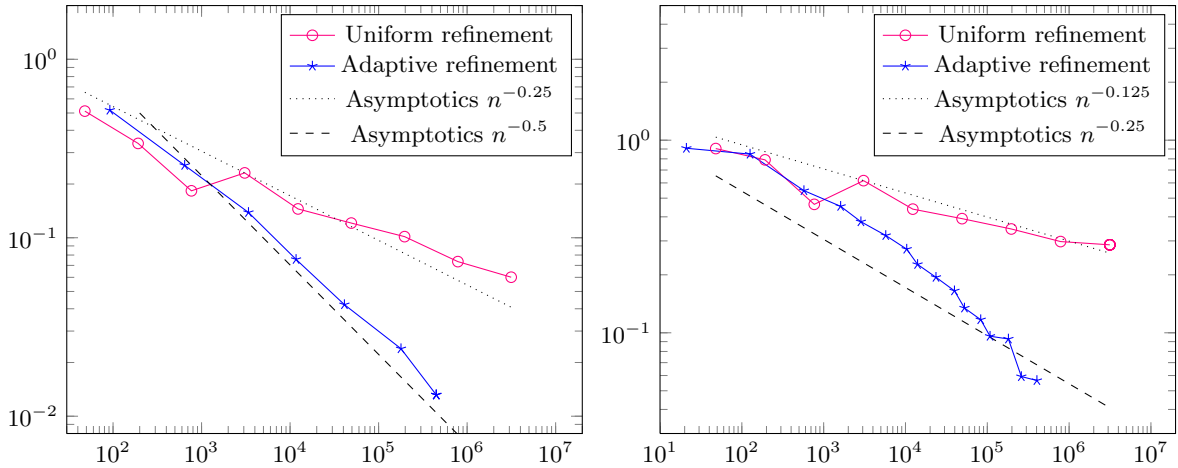
\begin{figure}[ht!]
\begin{center}
\pgfplotstableread{
48 0.5123365119 92 0.5186398723 
192 0.3377463262 644 0.2544818477 
768 0.1832319011 3389 0.1386063301 
3072 0.2308786237 11685 0.0756392232 
12288 0.1450360075 41098 0.0421897889 
49152 0.1208571488 179059 0.0238976208 
196608 0.1014540592 448234 0.0131669205 
786432 0.0734999322 448234 0.0131669205 
3145728 0.0601079572 448234 0.0131669205 
}\datatable
{\footnotesize\begin{tikzpicture}[scale=1]
\begin{loglogaxis}[xmin=30, xmax=20000000, ymin=0.008 , ymax=2]

\addplot [color=ManuRed,mark=o] table[x index=0, y index=1] {\datatable};
\addplot [color=blue, mark=star] table[x index=2, y index=3] {\datatable};

\addplot [color=black,dotted]  plot coordinates {(48, 0.653) (3145728, 0.0408)};
\addplot[color=black,dashed] plot coordinates {(200, 0.5) (1000000, 0.007)};

\legend{Uniform refinement\\ Adaptive refinement\\Asymptotics $n^{-0.25}$\\Asymptotics $n^{-0.5}$\\}
\end{loglogaxis}
\end{tikzpicture}}
\pgfplotstableread{
48 0.9047392763 21 0.9080435412
192 0.7905560541 126 0.8465289688
768 0.4643444653 574 0.5472434816
3072 0.6165508005 1612 0.4535397633
12288 0.4385794108 2853 0.3774309128
49152 0.3914645320 5739 0.3201015369
196608 0.3459198310 10355 0.2718107127
786432 0.2976181170 14054 0.2264756227
3145728 0.2862975718 23591 0.1941859271
3145728 0.2862975718 39702 0.1651967132
3145728 0.2862975718 52590 0.1345236012
3145728 0.2862975718 83091 0.1169779225
3145728 0.2862975718 108016 0.0959835508
3145728 0.2862975718 181216 0.0928693735
3145728 0.2862975718 260604 0.0592611440
3145728 0.2862975718 404049 0.0565484260
}\datatable
{\footnotesize\begin{tikzpicture}[scale=1]
\begin{loglogaxis}[xmin=10, xmax=20000000, ymin=0.03 , ymax=5]

\addplot [color=ManuRed,mark=o] table[x index=0, y index=1] {\datatable};
\addplot [color=blue, mark=star] table[x index=2, y index=3] {\datatable};

\addplot [color=black,dotted]  plot coordinates {(48, 1.037) (3145728, 0.2592)};
\addplot [color=black,dashed]  plot coordinates {(48, 0.653) (3145728, 0.0408)};

\legend{Uniform refinement\\ Adaptive refinement
\\Asymptotics $n^{-0.125}$\\Asymptotics $n^{-0.25}$\\}
\end{loglogaxis}
\end{tikzpicture}}
\caption{Energy norm of the residual vector for adaptive and uniform refinement for $\alpha = 0.5$
(left) and $\alpha = 0.75$ (right).\label{fig:convFich}}
\end{center}
\end{figure}

In \autoref{fig:convFich}, the observed convergence rates are found.
We plotted the energy norm of the residual vector for both, uniform and 
adaptive refinement, into a log-log plot. Notice that the results of 
the uniform refinement are produced by the same adaptive wavelet 
method via enforcing a uniform (hence non-adaptive) refinement in the step {\sc Refine}.

For $\alpha = 0.5$ and uniform refinement, the left plot in \autoref{fig:convFich} shows a rate that seems to be even slightly worse than $n^{-(1-\alpha)/2} = n^{-0.25}$.
In case of adaptive refinement, we obtain a rate of $n^{-(1-\alpha)}
= n^{-0.5}$, which is exactly what we expect. Another observation can 
be made by comparing the number of degrees of freedom which 
are necessary in order to compute the approximate density for 
uniform refinement and for adaptive refinement. The norm of the 
residual is about $6 \cdot 10^{-2}$ for uniform refinement with 
more than $3$ million degrees of freedom. For adaptive refinement, 
we obtain a norm of the residual of about $4.2 \cdot 10^{-2}$ already 
for approximately $40\,000$ degrees of freedom, which is quite 
impressive.

\begin{figure}[ht!]
\begin{center}
\includegraphics[scale=0.28]{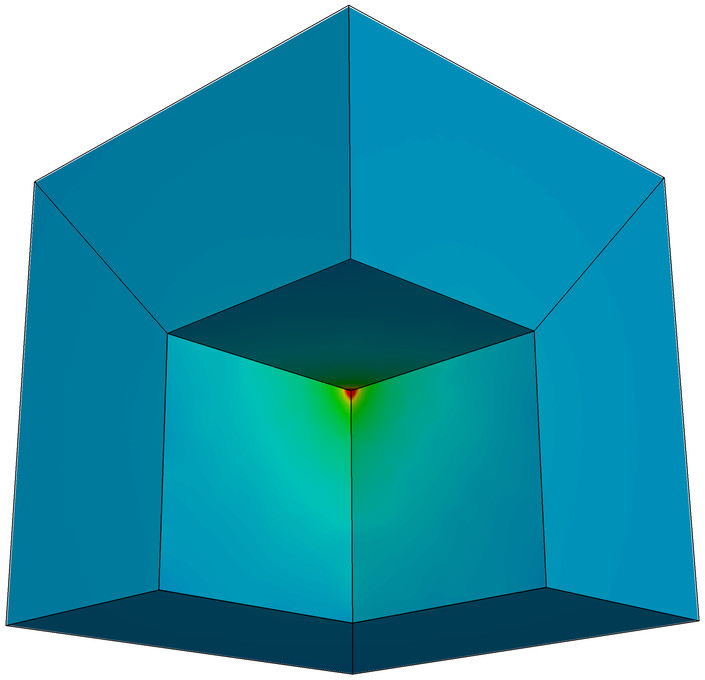}\quad
\includegraphics[scale=0.28]{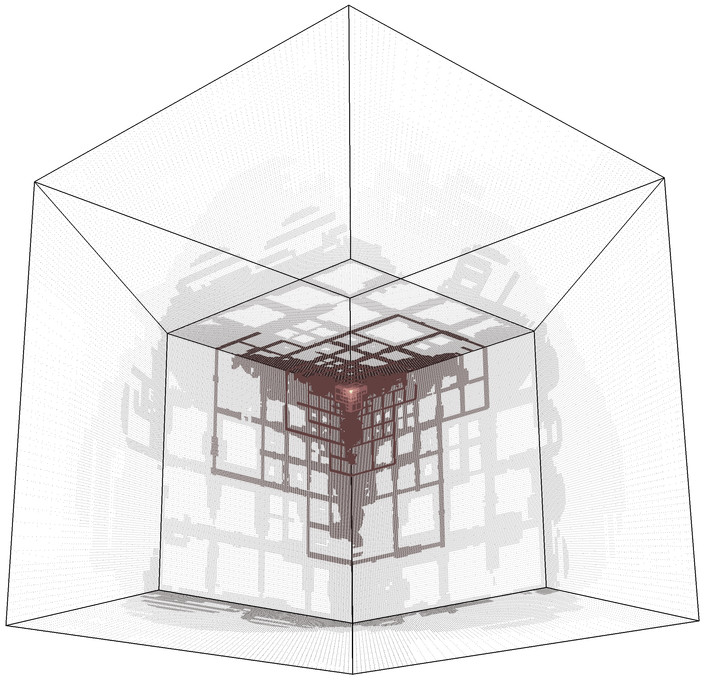}
\caption{Approximate density $u$ (left) and associated 
adaptive refinement (right) for $\alpha = 0.5$.\label{fig:Fich50}}
\end{center}
\end{figure}

We find the approximate density in the left image of \autoref{fig:Fich50}, while the refinement produced by the adaptive wavelet scheme is shown right next to it. The refinement is visualized by 
plotting the indices of all active wavelets. It is clearly visible that the 
adaptive method refines towards reentrant corner, where the right-hand 
side has its singularity. We also observe an interesting pattern in form 
of a grid in the refinement around the corner. This artefact comes 
from the large support of the wavelets. 

\begin{figure}[ht!]
\begin{center}
\includegraphics[scale=0.28]{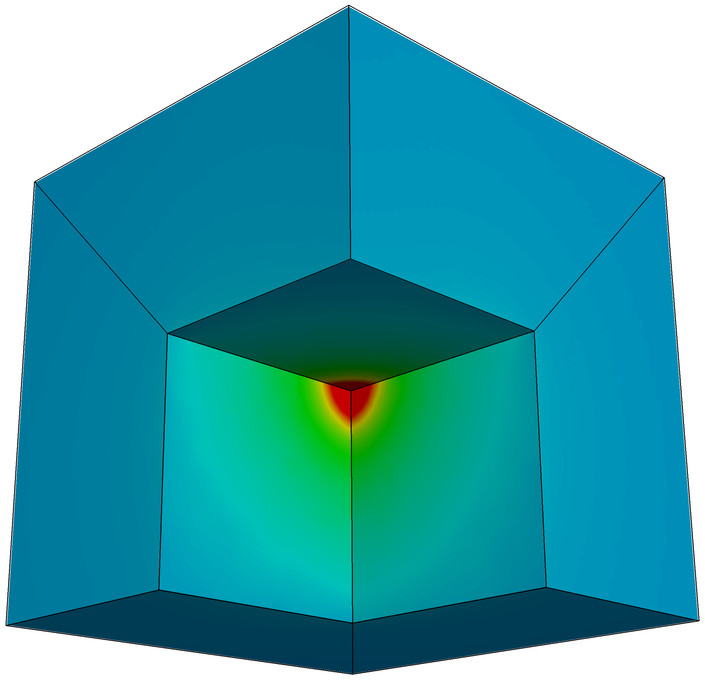}\quad
\includegraphics[scale=0.28]{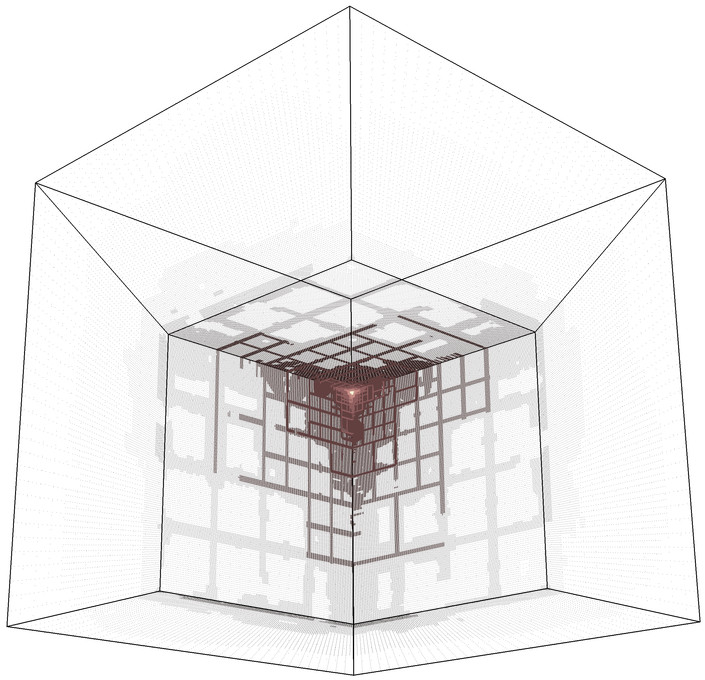}
\caption{Approximate density $u$ (left) and associated 
adaptive refinement (right) for $\alpha = 0.75$.\label{fig:Fich75}}
\end{center}
\end{figure}

Next, let us discuss the numerical results for $\alpha = 0.75$. Due to \autoref{thm:example}, for this 
choice, we expect the adaptive wavelet method to converge 
at a rate of at least $n^{-(1-\alpha)} = n^{-0.25}$. 
In contrast, for uniform refinement, 
we again only expect at most half the rate, i.e., $n^{-(1-\alpha)/2} = n^{-0.125}$. 
Indeed, it can be seen in the right plot of \autoref{fig:convFich} that the adaptive wavelet scheme converges 
at the expected rate or even slightly better. In comparison, we 
observe the reduced rate $n^{-0.125}$ for uniform refinement.
By comparing the norm of the residuals for both strategies, we 
again confirm the superiority of the adaptive code. In order for 
uniform refinement to produce an error of $2.9 \cdot 10^{-1}$, it 
needs more that $3$ million degrees of freedom, whereas the adaptive 
code produces an error of $2.7 \cdot 10^{-1}$ with less than 
$15 \, 000$ degrees of freedom. 
The approximate density, as well as the adaptive refinement, for this example finally are found in \autoref{fig:Fich75}.

\subsection{Cartoon function as right-hand side}
In the last example, the cube $\Omega := (-1,1)^3$ is chosen 
as domain, the boundary of which is represented by 6 patches. 
The right-hand side under consideration is a cartoon function, 
namely
\begin{equation*}
  g(x) 
  := \begin{cases} 
  		1 & \text{if}\ \abs{x-(0,0,1)}^2\le\frac{1}{2},\\
  		0, & \text{elsewhere}.
  	\end{cases}
\end{equation*}

Cartoon functions have been studied in \cite{Don}. 
They can be approximated adaptively in an isotropic setting at the rate 
of $n^{-0.5}$. Since, however, cartoon functions $g$ have a jump 
discontinuity, the best we can expect is $g\in H^{1/2-\epsilon}
(\partial\Omega)$ with $\epsilon>0$ being arbitrary small. 
Therefore, \autoref{prop:Verchota} implies that the Sobolev regularity 
of the exact solution $u\in L_2(\partial\Omega)$ to the second kind 
Fredholm integral equation \link{DoubleLayerProb} for the double 
layer potential operator~\link{DLP} with right-hand side $g$ is also 
bounded by $1/2$. Consequently, for the uniform code, we would 
expect the approximation rate $n^{-0.25}$. This is in perfect 
accordance with our numerical experiment; see \autoref{fig:convCube}.
Moreover, also in this example the observed rate for adaptive 
refinement is twice as large, i.e., the norm of the residuum 
decays at a rate of $n^{-0.5}$. Indeed, as seen 
in the left image of \autoref{fig:Cube}, the approximate density 
is basically also a cartoon function. This issues from the fact
that the kernel of the double layer operator is zero on
a plane patch since $x-y$ is perpendicular to the normal $\eta(y)$.
The right image of \autoref{fig:Cube} illustrates the corresponding 
refinement produced by the adaptive algorithm. It is clearly seen that 
refinement mainly takes place at the jump of the right-hand side.

\pgfplotstableread{
24 0.1363953699 32 0.1364189637
96 0.1347929985 147 0.0687532832 
384 0.0686609961 738 0.0369364185 
1536 0.0482202337 3129 0.0195492698
6144 0.0318406504 8207 0.0102049593
24576 0.0277205991 29029 0.0073730154
98304 0.0188861182 77311 0.0046253998 
393216 0.0137424768 185680 0.0029000619
393216 0.0137424768 574088 0.0021026280
}\datatable
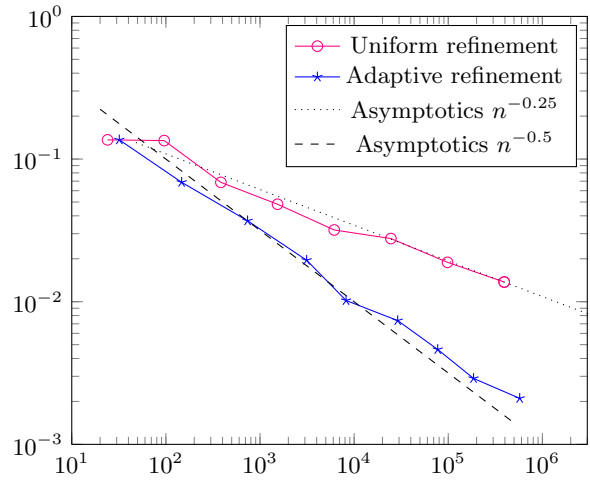
\begin{figure}[t!]
\begin{center}
{\footnotesize
\begin{tikzpicture}[scale=1]
\begin{loglogaxis}[xmin=10, xmax=3000000, ymin=0.001 , ymax=1]
\addplot [color=ManuRed,mark=o] table[x index=0, y index=1] {\datatable};
\addplot [color=blue, mark=star] table[x index=2, y index=3] {\datatable};
\addplot [color=black,dotted]  plot coordinates {(48, 0.1306) (3145728, 0.00816)};
\addplot[color=black,dashed] plot coordinates {(20, 0.2236) (500000, 0.001414)};
\legend{Uniform refinement\\Adaptive refinement\\Asymptotics $n^{-0.25}$\\Asymptotics $n^{-0.5}$\\}
\end{loglogaxis}
\end{tikzpicture}}
\caption{Energy norm of the residual for adaptive and uniform refinement 
for the cartoon right-hand side.\label{fig:convCube}}
\end{center}
\end{figure}

\begin{figure}[ht!]
\begin{center}
\includegraphics[scale=0.28]{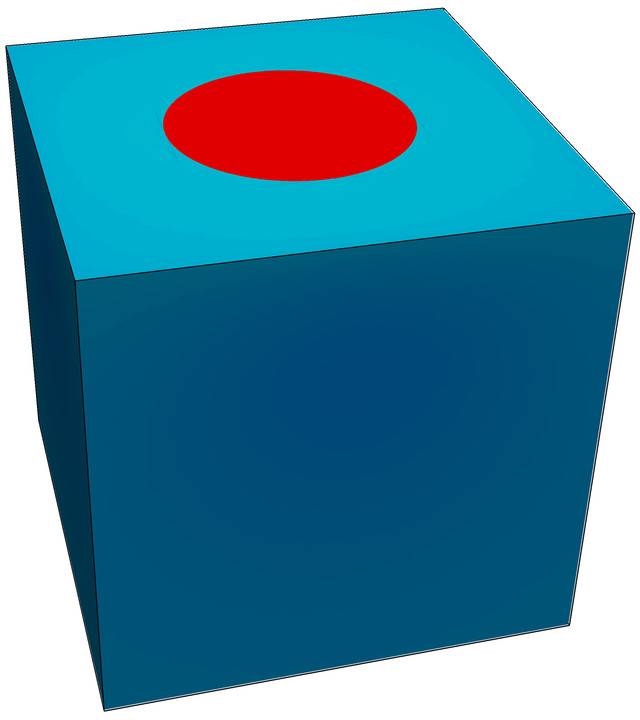}\quad
\includegraphics[scale=0.28]{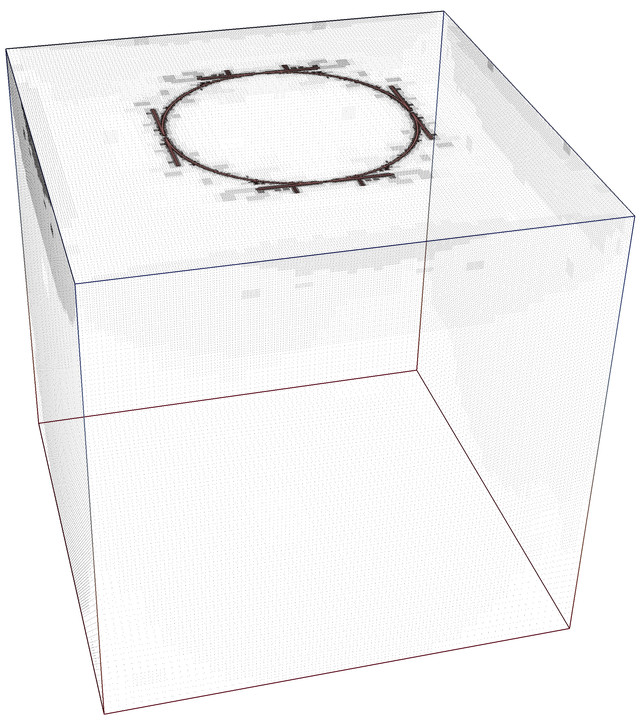}
\caption{Approximate density $u$ (left) and 
associated refinement (right) for the cartoon right-hand 
side on the cube.\label{fig:Cube}}
\end{center}
\end{figure}

\section{Conclusion}\label{sec:conclusion}
In this article, we considered the adaptive solution of boundary
integral equations by using wavelet bases. We have presented 
the regularity theory which is available and presented numerical
results which are in good agreement with the theoretical predictions.
In conclusion, we have seen that the adaptive wavelet BEM is able 
to produce optimal convergence rates, which, for the examples 
under consideration, are twice as high as for methods based on
uniform refinement.

\begin{appendix}
\phantomsection
\addcontentsline{toc}{section}{Appendix}
\section*{Appendix}\label{sect:appendix}
\setcounter{section}{1}

Here we calculate the weighted and unweighted Sobolev regularity of a point singularity. To do so, we need the following auxiliary estimate:

\begin{lemma}\label{lem:lowerbound}
Let $d\in\N$, $\alpha>0$ and $M>2$. Then for all $x,h\in\R^d$ with $x_j, h_j\geq 0$ for every $j=1,\ldots,d$ and $0<\abs{x}\leq \abs{h}/M$ we have
\begin{align*}
\abs{\abs{x}^{-\alpha} - \abs{x+h}^{-\alpha}} \geq (M^{\alpha} - 2^{\alpha})\, \abs{h}^{-\alpha}
\end{align*}
\end{lemma}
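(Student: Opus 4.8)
The plan is to bound the two terms $\abs{x}^{-\alpha}$ and $\abs{x+h}^{-\alpha}$ separately against $\abs{h}^{-\alpha}$ and then subtract, exploiting throughout that $t\mapsto t^{-\alpha}$ is decreasing on $(0,\infty)$. A lower bound for the first term is immediate from the hypothesis $\abs{x}\leq\abs{h}/M$: applying the decreasing power gives $\abs{x}^{-\alpha}\geq M^{\alpha}\,\abs{h}^{-\alpha}$. For the subtracted term I instead need an \emph{upper} bound on $\abs{x+h}^{-\alpha}$, which means I must produce a \emph{lower} bound on $\abs{x+h}$.

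First I would obtain that lower bound from the reverse triangle inequality $\abs{x+h}\geq\abs{h}-\abs{x}$. Combined with $\abs{x}\leq\abs{h}/M$ this yields $\abs{x+h}\geq\abs{h}\,(1-1/M)=\abs{h}\,\tfrac{M-1}{M}$, and the hypothesis $M>2$ is precisely what forces $\tfrac{M-1}{M}>\tfrac12$, hence $\abs{x+h}>\abs{h}/2$. Monotonicity then gives $\abs{x+h}^{-\alpha}<2^{\alpha}\,\abs{h}^{-\alpha}$, which is exactly the origin of the constant $2^{\alpha}$ in the claim.

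Subtracting the two estimates produces $\abs{x}^{-\alpha}-\abs{x+h}^{-\alpha}\geq(M^{\alpha}-2^{\alpha})\,\abs{h}^{-\alpha}$. Since $M>2$ forces $M^{\alpha}>2^{\alpha}$, the right-hand side is strictly positive, so the expression inside the modulus is positive and the absolute value may simply be dropped, giving the assertion. I would also record that $0<\abs{x}\leq\abs{h}/M$ guarantees $\abs{h}\geq M\abs{x}>0$, so every negative power occurring above is well defined.

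The computation is entirely elementary, so the only real care needed is bookkeeping on the \emph{directions} of the inequalities: one wants a lower bound on $\abs{x}^{-\alpha}$ but an upper bound on $\abs{x+h}^{-\alpha}$, and the latter is what makes it necessary to convert the reverse triangle inequality into a lower bound on $\abs{x+h}$. I would further remark that the componentwise nonnegativity $x_j,h_j\geq 0$ is not actually needed for this particular estimate; it only sharpens it, since then $\sum_j x_j h_j\geq 0$ gives $\abs{x+h}\geq\abs{h}$ and hence the even larger constant $M^{\alpha}-1$. The positivity is presumably carried along only because of the geometric setting (local polar coordinates on the cone faces) in which the lemma is later applied.
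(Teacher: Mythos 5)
Your proof is correct, and it differs from the paper's argument in a way worth noting. The paper leans on the componentwise nonnegativity twice: first to get $\abs{x}\leq\abs{x+h}$, which resolves the modulus up front, and then, after factoring out $\abs{h}^{\alpha}$, to bound the subtracted term via the \emph{forward} triangle inequality, $\bigl(\abs{h}/\abs{x+h}\bigr)^{\alpha}\leq\bigl(1+\abs{x}/\abs{x+h}\bigr)^{\alpha}\leq 2^{\alpha}$; in that argument the hypothesis $M>2$ serves only to make the final constant positive. You instead never touch the sign conditions: the \emph{reverse} triangle inequality together with $\abs{x}\leq\abs{h}/M$ gives $\abs{x+h}\geq\abs{h}\,\tfrac{M-1}{M}>\abs{h}/2$ (here $M>2$ does double duty), which bounds the subtracted term by $2^{\alpha}\abs{h}^{-\alpha}$, and the modulus is resolved a posteriori because your lower bound $(M^{\alpha}-2^{\alpha})\,\abs{h}^{-\alpha}$ is positive. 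Your route therefore proves a slightly stronger statement --- the estimate holds for arbitrary $x,h\in\R^d$ with $0<\abs{x}\leq\abs{h}/M$, no sign restrictions --- and your closing remark is also accurate: under the stated nonnegativity one has $\langle x,h\rangle\geq 0$, hence $\abs{x+h}\geq\abs{h}$, and both your argument and the paper's could then be sharpened to the constant $M^{\alpha}-1$. The sign hypotheses in the lemma are indeed an artifact of the geometric situation (finite differences on a cone face) in which it is applied in Step 3 of the regularity proposition, not a necessity for the inequality itself.
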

\begin{proof}
Since $x_j,h_j\geq 0$ for all $j=1,\ldots,d$ we have $\abs{x}\leq\abs{x+h}$ which yields $\abs{x}^{-\alpha} \geq \abs{x+h}^{-\alpha}$ because of $\alpha>0$.
This gives
\begin{align*}
\abs{h}^{\alpha} \cdot \abs{\abs{x}^{-\alpha}- \abs{x+h}^{-\alpha}}
&= \left( \frac{\abs{x}}{\abs{h}} \right)^{-\alpha} - \left( \frac{\abs{x+h}}{\abs{h}} \right)^{-\alpha} 
\geq M^{\alpha} - \left( \frac{\abs{h}}{\abs{x+h}} \right)^{\alpha} 
\end{align*}
since $\abs{x} \leq \abs{h}/M$ implies $\abs{x}^{\alpha} \leq (\abs{h}/M)^{\alpha}$ and $(\abs{x}/\abs{h})^{-\alpha} \geq M^{\alpha}$.
Now the triangle inequality combined with $\abs{x}/ \abs{x+h} \leq 1$ shows that the latter quantity is lower bounded by 
\begin{align*}
M^{\alpha} - \left( \frac{\abs{h}}{\abs{x+h}} \right)^{\alpha} 
&\geq M^{\alpha} - \left( \frac{\abs{x+h}+\abs{x}}{\abs{x+h}} \right)^{\alpha}
= M^{\alpha} - \left( 1+\frac{\abs{x}}{\abs{x+h}} \right)^{\alpha}
\geq M^{\alpha} - 2^{\alpha}
\end{align*}
and the proof is complete.
\end{proof}

Now the regularity result reads as follows.

\begin{prop}\label{prop:regularity}
Let $\partial\Omega$ denote a Lipschitz surface according to \autoref{sect:boundaries} and assume $\nu$ to be one of its vertices.
Moreover, given $1/2 \leq \alpha < 1$, 
consider the restriction of 
\begin{equation*}
g(x):=\abs{x-\nu}^{-\alpha}, \quad x\in\R^3,
\end{equation*}
to $\partial\Omega$.
Then
\begin{align*}
g\in H^s(\partial\Omega) \cap X_{\rho}^1(\partial\Omega) \qquad \text{for all} \quad 0\leq s < 1-\alpha \quad \text{and all} \quad 0\leq \rho < 1-\alpha,
\end{align*}
but $g\notin H^{s}(\partial\Omega)$ if $s \geq 1-\alpha$.
\end{prop}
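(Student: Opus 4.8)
The function $g$ is $C^\infty$ on $\partial\Omega\setminus\{\nu\}$, so all three assertions are governed entirely by the behavior of $g$ near the vertex $\nu$. The plan is to reduce everything to the radial power $\abs{y}^{-\alpha}$ on a flat two--dimensional sector: after centering the affine parametrization of each face abutting $\nu$ at the apex and introducing the local polar coordinates $(r,\phi)$ of \autoref{sect:boundaries}, one has $g=r^{-\alpha}$ there, while on the remaining faces $g$ is smooth and bounded. Throughout I use that $\alpha\geq 1/2$ forces the threshold $1-\alpha\leq 1/2$, so every exponent $s$ in question lies in $0\leq s<1/2$; in that range $H^s(\partial\Omega)$ splits patchwise (no compatibility across edges), and the relevant derivative and difference quantities may be computed face by face.

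\textbf{$L_2$ and weighted membership.} Square--integrability is immediate, since in polar coordinates the singular contribution is $\int r^{-2\alpha}\,r\,\d r\,\d\phi$, which converges precisely because $\alpha<1$. For $g\in X^1_\rho$ I substitute $g=r^{-\alpha}$ into the norm \link{eq:normCn} with $k=1$. As $g$ is radial, the angular term $\beta=(0,1)$ vanishes (up to harmless smooth contributions from the cutoff $\varphi_n$), and only the radial term $\beta=(1,0)$ survives, where $r\,\partial_r g=-\alpha\,r^{-\alpha}$. Using $(1+1/r)^\rho\sim r^{-\rho}$ near the apex, the integrand of that $L_2$--term behaves like $r^{1-2\rho-2\alpha}\,q^{2-2\rho}$, whose radial integral converges iff $\rho<1-\alpha$ (the angular integral converges for all $\rho<3/2$). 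The faces not meeting $\nu$ contribute only smooth, weight--integrable terms, and a truncation near the apex provides the required approximation by functions in $C_{0,\mathrm{fw}}^\infty$. This yields $g\in X^1_\rho$ exactly for $\rho<1-\alpha$.

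\textbf{Positive Sobolev regularity.} For the upper bound $g\in H^s$, $0\leq s<1-\alpha$, I localize with a smooth cutoff and use the first--difference characterization $\norm{f\sep H^s(\R^2)}^2\sim\norm{f\sep L_2}^2+\int_{\R^2}\norm{f(\cdot+h)-f\sep L_2}^2\,\abs{h}^{-2-2s}\,\d h$, valid for $0<s<1$. Splitting the spatial integral defining $\norm{g(\cdot+h)-g\sep L_2}^2$ into the region $\abs{y}>2\abs{h}$ (mean--value estimate, $\abs{\nabla g}\sim r^{-\alpha-1}$) and $\abs{y}\leq 2\abs{h}$ (crude bound by $\abs{y}^{-\alpha}+\abs{y+h}^{-\alpha}$) gives in both cases $\lesssim\abs{h}^{2-2\alpha}$ for small $\abs{h}$. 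Hence $\int_{\abs{h}\leq 1}\abs{h}^{2-2\alpha}\,\abs{h}^{-2-2s}\,\d h\sim\int_0\varrho^{1-2s-2\alpha}\,\d\varrho<\infty$ iff $s<1-\alpha$, while the large--$\abs{h}$ part is harmless for $s>0$ (and reduces to $L_2$ for $s=0$). Transferring back through the affine chart, together with the patchwise splitting for $s<1/2$, gives $g\in H^s(\partial\Omega)$ for all $s<1-\alpha$.

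\textbf{Sharpness --- the crux.} The claim $g\notin H^s$ for $s\geq 1-\alpha$ is where \autoref{lem:lowerbound} enters, and I expect this step to be the main obstacle, chiefly for geometric bookkeeping. By the embedding $H^{s'}\hookrightarrow H^s$ it suffices to rule out $s=1-\alpha<1$, so I may work with the Gagliardo seminorm. Restricting the double integral to pairs $x,\,x'=x+h$ lying in one flat face $F$ abutting $\nu$ --- where the chordal distance equals the planar distance $\abs{h}$ --- I choose orthonormal coordinates so that a sub--sector of $F$ sits in the closed first quadrant, securing the sign conditions $x_j,h_j\geq 0$ demanded by the lemma. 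For fixed small $h$ in this sub--sector, integrating over $\{x:\,x_j\geq 0,\ 0<\abs{x}\leq\abs{h}/M,\ x,x+h\in F\}$, a set of planar measure $\gtrsim\abs{h}^2$, \autoref{lem:lowerbound} (with, e.g., $M=4$) yields $\abs{g(x)-g(x')}\gtrsim\abs{h}^{-\alpha}$, whence
\begin{equation*}
	[g]_{H^s(F)}^2 \gtrsim \int \frac{\abs{h}^{-2\alpha}}{\abs{h}^{2+2s}}\,\abs{h}^2\,\d h
	= \int \abs{h}^{-2\alpha-2s}\,\d h \sim \int_{0}\varrho^{1-2\alpha-2s}\,\d\varrho,
\end{equation*}
which diverges precisely when $s\geq 1-\alpha$. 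Since restricting to a single face only decreases the seminorm, $g\notin H^s(\partial\Omega)$ for $s\geq 1-\alpha$, matching the positive range and closing the proof. The delicate points are verifying the hypotheses of \autoref{lem:lowerbound} on a genuinely positive--measure region inside a possibly reentrant face, and keeping the planar--versus--surface distance identification clean.
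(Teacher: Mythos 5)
Your proof is correct, and its skeleton coincides with the paper's own: the reduction to single patches (legitimate since $1-\alpha\leq 1/2$), the verification of $g\in X^1_\rho(\partial\Omega)$ by inserting $g=r^{-\alpha}$ into \link{eq:normCn} with $k=1$ (radiality killing the angular term, the faces away from $\nu$ being harmless, and the same exponent count $1-2\alpha-2\rho>-1$), and the disproof of membership above the threshold via \autoref{lem:lowerbound} applied on a first-quadrant sub-sector near the apex. You deviate in two places. First, for the positive range $0\leq s<1-\alpha$ the paper extends $g$ to $\tilde{g}(y)=\abs{y}^{-\alpha}\zeta(\abs{y})$ and cites Runst--Sickel for the known sharp result $f_{-\alpha,0}\in F^s_{p,q}(\R^2)$ if and only if $s<2/p-\alpha$; you instead prove the difference bound $\norm{g(\cdot+h)-g\sep L_2}^2\lesssim\abs{h}^{2-2\alpha}$ by hand through a near/far splitting relative to the singularity. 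Your route is more elementary and self-contained; the paper's citation buys, at no extra cost, sharpness in the whole Triebel--Lizorkin scale rather than only the $L_2$-based case needed here. Second, for the negative statement the paper works with the modulus of smoothness and the norm of $B^{1-\alpha}_{2,2}(F)$, while you restrict the Gagliardo--Slobodeckij double integral; for $0<s<1$ these are equivalent characterizations of $H^s$, so the mechanism --- a set of measure $\gtrsim\abs{h}^2$ on which the lemma gives $\abs{\Delta_h g}\gtrsim\abs{h}^{-\alpha}$, leading to the divergent integral $\int_0\varrho^{1-2\alpha-2s}\d\varrho$ --- is identical. A small bonus of your write-up: by insisting that $h$ range over a \emph{convex} sub-sector of $F$ inside the first quadrant, you guarantee $x+h\in F$ cleanly; the paper's corresponding claim ($\Gamma_h\subseteq F_h$ for \emph{all} first-quadrant $h$) is stated more generously than necessary (it can fail when the opening angle $\gamma$ is smaller than $\pi/2$), which is harmless there only because the modulus of smoothness requires a supremum over suitable $h$ rather than all of them.
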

\begin{proof}
\emph{Step 1.} Here we show that (the restriction of) $g$ is contained in $H^s(\partial\Omega)$. Note that $\alpha \geq 1/2$ implies $0\leq s<1/2$, i.e., it suffices to prove that $g$ belongs to each Sobolev space w.r.t.\ every single patch $F$ that is part of the description of $\partial\Omega$. According to the structure of $g$ it is obvious that $g\in H^1(F) \hookrightarrow H^s(F)$ for all patches $F$ which do not contain the critical vertex $\nu$ since $g$ is bounded and smooth on the closure of these patches.

By construction each of the remaining patches $F$ (with $\nu\in\overline{F}$) are contained in some face $\Gamma$ of the infinite tangent cone $\mathcal{C}$ subordinate to the vertex $\nu$.
W.l.o.g.\ we can assume that $\nu=0$ and 
\begin{align}\label{eq:gamma}
	\Gamma =\{y=(r\,\cos(\phi),r\,\sin(\phi))\in \R^2 \sep r>0,\, \phi\in(0,\gamma) \}
\end{align}
with some opening angle $\gamma \in(0,2\pi)$. Then the restriction of $g$ to $F$ takes the form
\begin{equation*}
	g(y)=r^{-\alpha}, \qquad y\in F\subset \Gamma.
\end{equation*}

For $s\geq 0$ and $d\in\N$ it is well-known that $H^s(M)=F^s_{2,2}(M)=B^{s}_{2,2}(M)$, where $M$ can be any (bounded or unbounded) Lipschitz domain in $\R^d$ or $\R^d$ itself. Here $B^s_{p,q}$ and $F^s_{p,q}$ (where $0<p,q<\infty$ and $s\in\R$) denote the scales of \emph{classical Besov and Triebel-Lizorkin spaces}, respectively; see, e.g., \cite[Subsection~1.11.1]{T06} for precise definitions. Hence, in order to prove the claim, it suffices to show that $g\in F^s_{2,2}(F)$.
According to the definition of this space it is enough to find some $\tilde{g}\in F^s_{2,2}(\R^2)$ which extends $g$ from $F$ to the whole of $\R^2$.
To this end, we choose a sufficiently smooth cut-off function $\zeta\colon \R\to [0,1]$ with 
\begin{equation*}
	\zeta(r)
	=\begin{cases}
		1 & \quad \text{if }\; 0\leq r < R,\\
		0 & \quad \text{if }\; 2R\leq r,
	\end{cases}
\end{equation*}
and $R>0$ large enough such that $\zeta(\abs{y})\equiv 1$ for all $y\in F$. Then 
\begin{align*}
	\tilde{g}(y):=\abs{y}^{-\alpha}\,\zeta(\abs{y}), \qquad y\in\R^2,
\end{align*}
defines an extention of $g$, i.e., it 
satisfies $\tilde{g}\big|_{F}=g$. 
Hence, it only remains to prove the membership of $\tilde{g}$ in $F^s_{2,2}(\R^2)$.
Fortunately, $\tilde{g}$ with $\alpha \neq 0$ coincides with the function $f_{-\alpha,0}$ studied in \cite[Subsection~2.3.1]{RS96} and there it is shown that for $s>2\cdot\max\{0,1/p-1\}$ and $0< p,q < \infty$ 
we indeed have $\tilde{g}=f_{-\alpha,0}\in F^s_{p,q}(\R^2)$ if and only if $s<2/p - \alpha$. 
This particularly shows that $0<\alpha<1$ implies $\tilde{g}\in F^s_{2,2}(\R^2)$ for all $0<s<1-\alpha$. But then a simple embedding shows that the case $s=0$ is covered as well. So, the proof of $g\in H^s(\partial\Omega)$ for all $0\leq s < 1-\alpha$ is complete.

\emph{Step 2.} We turn to the proof of the membership of $g$ in $X^1_\rho(\partial\Omega)$. Here we have to show that $\varphi_n g \in X^1_\rho(\partial\mathcal{C}_n)$, $n=1,\ldots,N$, where $\partial\mathcal{C}_n$ is the boundary of the infinite tangent cone subordinate to the $n$th vertex $\nu_n$ and $\varphi_n$ is a smooth cut-off function with support in a neighborhood of $\nu_n$. For this purpose, we have to bound the norm $\norm{\varphi_n g \sep X^1_{\rho}(\partial\Co_n)}$ given by \link{eq:normCn} with $f_n$ replaced by $\varphi_n g$ and $k=1$. Due to Step 1 we have $g\in L_2(\partial\Omega)$ and hence $\norm{\varphi_n g \sep L_2(\partial\mathcal{C}_n)}<\infty$ for all $n=1,\ldots,N$. Thus, it remains to estimate
\begin{equation*}
	\sum_{(\beta_r,\beta_\phi)\in\{(1,0),\, (0,1)\}} 
		\norm{ \left(1+\frac{1}{r}\right)^{\rho} \,r^{\beta_r}\, q^{1-\rho} \, \frac{\partial}{\partial \phi^{\beta_\phi}\,\partial r^{\beta_r}}  (\varphi_n g)_{n,t} \sep L_2\!\left(\Gamma^{n,t}\right) }
\end{equation*}
for all vertices $\nu_n$, $n=1,\ldots,N$, and each face $\Gamma^{n,t}$, $t=1,\ldots,T_n$, of their associated tangent cones. So let $n$ and $t$ be fixed. In order to simplify notation we can again assume that $\Gamma=\Gamma^{n,t}$ is given by~\link{eq:gamma}. If $\nu_n\neq\nu$, then 
\begin{align*}
G:=\frac{\partial}{\partial \phi^{\beta_\phi}\,\partial r^{\beta_r}}  \varphi_n g
\end{align*}
is smooth on $\overline{\Gamma}$ and thus uniformly bounded. Due to the compact support of $\varphi_n g$ and the fact that $q(\phi)\in(0,\pi)$, we obtain
\begin{align*}
\norm{ \left(1+\frac{1}{r}\right)^{\rho} \, r^{\beta_r}\, q^{1-\rho}\,G 
	\sep L_2\!\left(\Gamma\right) }^2
	&\leq \norm{G}_\infty^2 \, \int_0^\gamma q(\phi)^{2(1-\rho)} \, \int_0^R \abs{\left(1+\frac{1}{r}\right)^{\rho} r^{\beta_r} }^2 \, r \d r \d \phi \\
	&\lesssim \int_0^R \left(1+r\right)^{2\rho} r^{1-2\rho+2\beta_r} \d r < \infty
\end{align*}
for all $\beta=(\beta_r,\beta_\phi)\in\N_0^2$ provided that $0\leq \rho\leq 1/2$.

Now, if $\nu_n=\nu$, let us define $\Gamma_0:=\Gamma \cap \{y\in\R^2\sep \abs{y}<R_0\}$ with $R_0 \in (0,R)$ chosen small enough such that $\varphi_n\equiv 1$ in a neighborhood of $\Gamma_0$. 
Then
\begin{align*}
	\norm{ \left(1+\frac{1}{r}\right)^{\rho} \,r^{\beta_r}\, q^{1-\rho} \, \frac{\partial}{\partial \phi^{\beta_\phi}\,\partial r^{\beta_r}} \varphi_n g	\sep L_2\!\left(\Gamma\right) }^2 
&=\norm{ \left(1+\frac{1}{r}\right)^{\rho} \,r^{\beta_r}\, q^{1-\rho} \, \frac{\partial}{\partial \phi^{\beta_\phi}\,\partial r^{\beta_r}} g
	\sep L_2\!\left(\Gamma_0\right) }^2 \\
&\quad + \norm{ \left(1+\frac{1}{r}\right)^{\rho} \,r^{\beta_r}\, q^{1-\rho} \, \frac{\partial}{\partial \phi^{\beta_\phi}\,\partial r^{\beta_r}} \varphi_n g
	\sep L_2\!\left(\Gamma\setminus\Gamma_0\right) }^2.
\end{align*}
Note that for $\Gamma\setminus \Gamma_0$ the same arguments as above apply. Moreover, if $\beta=(\beta_r,\beta_\phi)=(0,1)$, then the first summand vanishes due to the rotational invariance of $g$, i.e., $\partial g/\partial \phi \equiv 0$ as $g$ does not depend on the angular variable $\phi$.
So we are left with bounding the first term for $\beta=(1,0)$. Here we have
\begin{align*}
&\int_0^\gamma \int_0^{R_0} \abs{ \left(1+\frac{1}{r}\right)^{\rho} \,r \, q(\phi)^{1-\rho} \, \frac{\partial}{\partial r} r^{-\alpha} }^2 \, r \d r \d \phi \\
&\qquad =\int_0^\gamma q(\phi)^{2(1-\rho)} \d \phi\, \int_0^{R_0} \left(1+\frac{1}{r}\right)^{2\rho} \,r^3  \, \abs{-\alpha \,r^{-\alpha-1} }^2 \d r \\
&\qquad \lesssim \int_0^{R_0} \left(1+r\right)^{2\rho} \, r^{1-2\alpha-2\rho} \d r < \infty
\end{align*}
provided that $0\leq \rho \leq 1$ and $1-2\alpha-2\rho>-1$ which particularly holds under the given assumptions $0\leq \rho < 1-\alpha$ and $1/2\leq \alpha<1$. 

\emph{Step 3.} Finally, we show that $g\notin H^{s}(\partial\Omega)$ if $s \geq 1-\alpha$. Since $0<1-\alpha \leq 1/2$ it suffices to prove $g\notin H^{1-\alpha}(F)=B_{2,2}^{1-\alpha}(F)$ for some patch $F\subset \partial\Omega$. To do so, let $F$ denote a patch with $\nu\in \overline{F}$ and let $\Gamma$ be the face of the corresponding tangent cone associated to $\nu$ which contains $F$. W.l.o.g.\ we again assume that $\nu=0$ and that $\Gamma$ is given by \link{eq:gamma}.
Then we can choose $T>0$ small enough and $M>2$ large enough such that
for all $h \in H_{T,M}:=\{\in\R^2 \sep h_1,h_2\geq 0,\, \abs{h}\leq T\}$ we have 
\begin{equation*}
\Gamma_h:=\Gamma\cap \left\{y\in\R^2 \sep x_1,x_2\geq0, \, \abs{y}< \frac{\abs{h}}{M} \right\} \subseteq F_h:=\{y\in F \;|\; y+h \in F\} \subseteq F.
\end{equation*} 
Then \autoref{lem:lowerbound} yields that for all $h \in H_{T,M}$ the first order finite difference $\Delta_h g:=g(\cdot+h)-g$ satisfies
\begin{align*}
\norm{ \Delta_h g \sep L_2(\Gamma_h)}^2
&=\int_{\Gamma_h} \abs{\Delta_h g(y)}^2 \d y \\
&=\int_{\Gamma_h} \abs{\abs{y}^{-\alpha} - \abs{y+h}^{-\alpha}}^2 \d y \\
&\geq (M^{\alpha} - 2^{\alpha})^2 \,\abs{h}^{-2\alpha} \int_0^{\min\{\gamma,\, \pi/2\}} \int_{0}^{\abs{h}/M} r \d r \d \phi \\
&\sim_{M,\alpha,\gamma} \abs{h}^{2-2\alpha}
\end{align*}
and hence for the modulus of smoothness it holds
\begin{align*}
\omega(g,t,F)_2
:= \sup_{\substack{h\in\R^2\\\abs{h}\leq t}} \norm{ \Delta_h g \sep L_2(F_h)} 
\geq \sup_{\substack{h\in\R^2\\h_1,h_2\geq 0, \, \abs{h}\leq t}} \norm{ \Delta_h g \sep L_2(\Gamma_h)} 
\gtrsim t^{1-\alpha} \qquad \text{for all} \quad 0<t<T,
\end{align*}
where we used that $1-\alpha > 0$. This shows
\begin{align*}
\norm{g \sep H^{1-\alpha}(F)} \sim \norm{g \sep B_{2,2}^{1-\alpha}(F)} 
:= \int_{0}^\infty \left[ t^{-(1-\alpha)} \, \omega(g,t,F)_2 \right]^2 \frac{\d t}{t}
\gtrsim \int_{0}^T \frac{\d t}{t} = \infty
\end{align*}
and thus $g\notin H^{s}(\partial\Omega)$ if $s \geq 1-\alpha$.
\end{proof}
\end{appendix}

\phantomsection
\addcontentsline{toc}{section}{References}
\bibliographystyle{is-abbrv}

\hfill
\section*{}
\noindent Stephan \textbf{Dahlke}\\
Philipps-University Marburg \\
Faculty of Mathematics and Computer Science\\ 
Workgroup Numerics and Optimization \\
Hans-Meerwein-Stra{\ss}e, Lahnberge \\
35032 Marburg, Germany \\
dahlke@mathematik.uni-marburg.de \\
\vspace{3mm}

\noindent Helmut \textbf{Harbrecht} and Manuela \textbf{Utzinger}\\
University of Basel  \\
Department of Mathematics and Computer Science\\
Research Group of Computational Mathematics \\
Spiegelgasse 1 \\
4051 Basel, Switzerland\\
\{helmut.harbrecht, manuela.utzinger\}@unibas.ch \\
\vspace{3mm}

\noindent Markus \textbf{Weimar}\\
University of Siegen\\
Faculty IV: Science and Technology\\
Department of Mathematics\\
Walter-Flex-Stra{\ss}e 3\\
57068 Siegen, Germany\\
weimar@mathematik.uni-siegen.de
\end{document}